\newtheorem{mainthm}{Theorem}
\newtheorem{maincorol}{Corollary}
\newenvironment{@abssec}[1]{%
    \if@twocolumn

      \section*{#1}%
    \else

      \vspace{.05in}\footnotesize
      \parindent .2in
 {\upshape\bfseries #1. }\ignorespaces
    \fi}
\par\vspace{.1in}\fi}
\newenvironment{keywords}{\begin{@abssec}{\keywordsname}}{\end{@abssec}}
\newenvironment{AMS}{\begin{@abssec}{\AMSname}}{\end{@abssec}}
\newcommand\keywordsname{Key words}
\newcommand\AMSname{AMS subject classifications}
\newcommand\AMname{AMS subject classification}
\newcommand\restr[2]{{
\left.\kern-\nulldelimiterspace 
#1 
\vphantom{|} 
\right|_{#2} 
}}
\newtheorem{theorem}{Theorem}[section]
\newtheorem{lemma}[theorem]{Lemma}
\newtheorem{corollary}[theorem]{Corollary}
\newtheorem{remark}[theorem]{Remark}
\newcommand{\RR}{\mathbb{R}}
\def\XXint#1#2#3{{\setbox0=\hbox{$#1{#2#3}{\int}$}
\vcenter{\hbox{$#2#3$}}\kern-.5\wd0}}
\newcommand{\osc}{\mathop{\mathrm{osc}}}
\newcommand{\dist}{\mathop{\mathrm{dist}}}  
\newcommand{\link}{\mathop{\circ\kern-.35em -}}
\newcommand{\ol}{\overline}
\newcommand{\pa}{\partial}
\newcommand{\dv}{\mathop{\mathrm{div}}}
\newcommand{\na}{\nabla}
\newcommand{\nr}{\Vert}
\newcommand{\gr}{\nabla}
\newcommand{\al}{\alpha}
\newcommand{\be}{\beta}
\newcommand{\ga}{\gamma}  
\newcommand{\Ga}{\Gamma}
\newcommand{\de}{\delta}
\newcommand{\De}{\Delta}
\newcommand{\ve}{\varepsilon}
\newcommand{\la}{\lambda}
\newcommand{\La}{\Lambda}    
\newcommand{\si}{\sigma}
\newcommand{\om}{\omega}
\newcommand{\Om}{\Omega}
\newcommand{\rn}{{\mathbb{R}}^N}
\newcommand{\sg}{\sigma}
\newcommand\setbld[2]{\left\{ #1 \;\middle |\; #2\right\}}
\newcommand{\cD}{\mathcal{D}}
\title{\bf Quantitative stability estimates for a two-phase Serrin-type overdetermined problem
}
\author{Lorenzo Cavallina \, Giorgio Poggesi \, Toshiaki Yachimura
}
\date{}
\begin{document}

\maketitle

\begin{abstract}
In this paper, we deal with an overdetermined problem of Serrin-type
with respect to a two-phase elliptic operator in divergence form with
piecewise constant coefficients. In particular, we consider the case where the two-phase overdetermined problem is close to the one-phase setting. First, we show quantitative stability estimates for the two-phase
problem via a one-phase stability result. Furthermore, we prove non-existence for the corresponding inner problem by the aforementioned two-phase stability result.
\end{abstract}

\begin{keywords}
two-phase, overdetermined problem, Serrin's problem, transmission condition, stability.
\end{keywords}

\begin{AMS}
35B35, 35J15, 35N25, 35Q93.
\end{AMS}

\pagestyle{plain}
\thispagestyle{plain}

\section{Introduction and main results}\label{sec:introduction}
Let $\Om$ be a bounded domain of $\RR^N$ ($N \ge 2$) and let $D$ be an open set such that $\ol{D} \subset \Om$. In this paper, we consider the following two-phase Dirichlet boundary value problem: 
\begin{equation}\label{eq:2phase problem}
\begin{cases}
-\dv{(\si \na u)}= 1 \, \mbox{ in } \, \Om ,\\
u=0 \, \mbox{ on } \, \pa \Om, 
\end{cases}
\end{equation}
where $\si = \si(x)$ is the piecewise constant function defined by $\si(x) = 1 + (\si_c - 1)\chi_D$ for some $\si_c >0$. More precisely, we consider the problem given by adding an overdetermined condition of Serrin-type to \eqref{eq:2phase problem}. That is, we focus on the following overdetermined problem: 
\begin{equation}\label{odp}
    \begin{cases}
    -\dv\left(\sg\gr u\right)=1 \, \mbox{ in } \, \Om,\\
    u=0 \, \mbox{ on } \, \pa\Om,\\
    \pa_n u=c \, \mbox{ on } \, \pa\Om, 
    \end{cases}
\end{equation}
where $n$ denotes the outward unit normal vector of $\pa \Omega$ and $\pa_n$ is the corresponding normal derivative. 
By integration by parts, it is easy to see that, if the overdetermined problem \eqref{odp} is solvable, then the parameter $c$ must be given by 
\begin{equation}\label{eq:value of c}
    c = - \frac{|\Omega|}{|\pa \Omega|}. 
\end{equation}

There are two different approaches for studying the solutions $(D,\Om)$ of the overdetermined problem above. Indeed, the overdetermined problem \eqref{odp} can be either regarded as an ``inner problem" or as an ``outer problem". 
Roughly speaking, the outer problem consists in determining the domain $\Om$ given $D$, while the inner problem consists in determining the inclusion $D$ given $\Om$ (for a precise definition of the inner problem and outer problem, see \cite{CY2019}). 


\begin{figure}[h]
\centering
\includegraphics[width=0.45\linewidth]{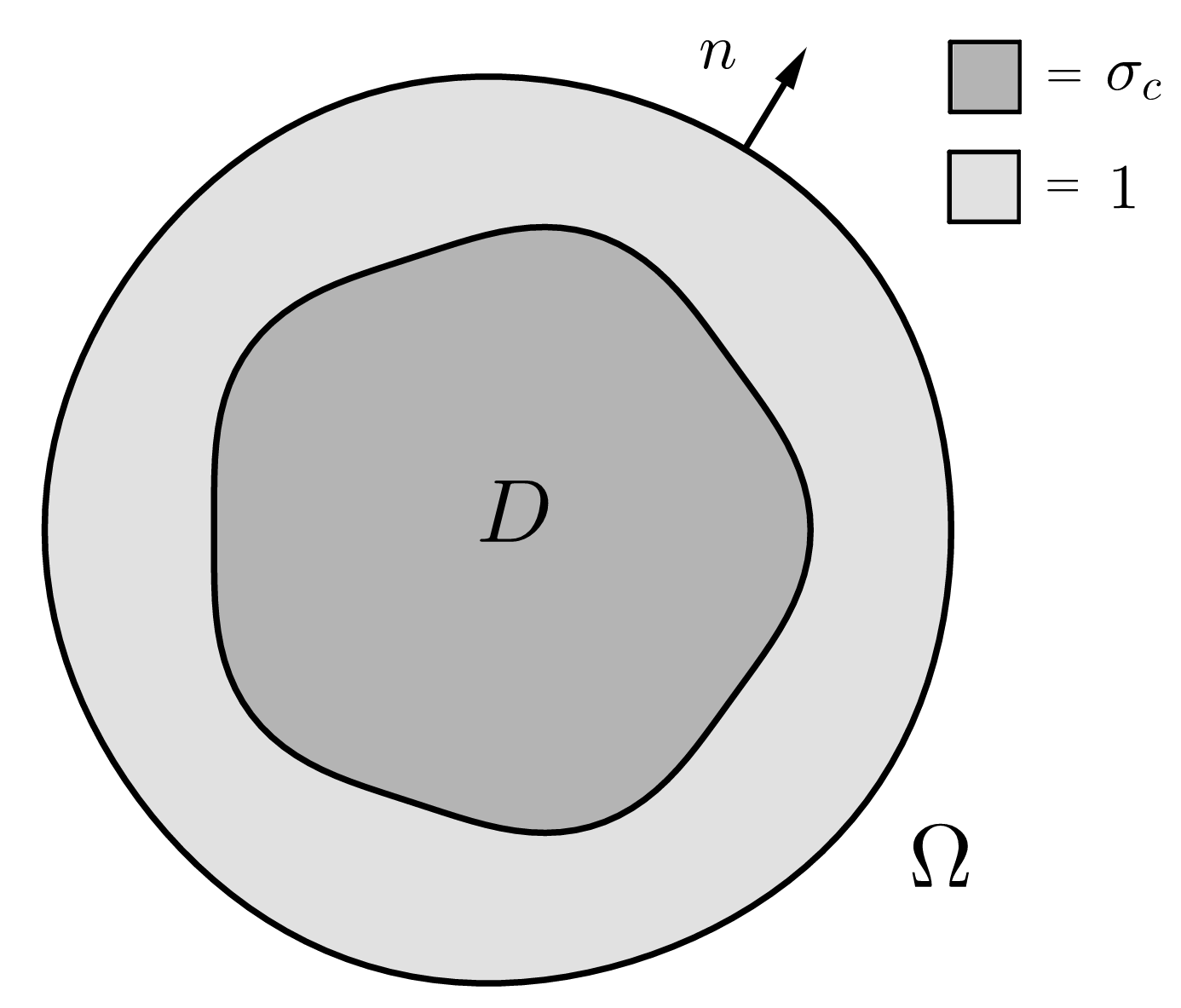}
\caption{Problem setting} 
\label{nonlinear}
\end{figure}

When $\si_c = 1$ (or, equivalently, $D = \emptyset$), it is known from Serrin's paper \cite{Se1971} that the overdetermined problem \eqref{odp} is solvable if and only if the domain $\Omega$ is a ball. 
%
%
In this paper, we will refer to the original Serrin's overdetermined problem as the ``one-phase problem". 

The two-phase setting, that is, when $\si_c \neq 1$ and $D \neq \emptyset$, is more complicated since solutions of the overdetermined problem \eqref{odp} are affected by the geometry of the inclusion $D$ or the domain $\Omega$. The first author and the third author, in \cite{CY2019}, proved local existence and uniqueness for the outer problem near concentric balls under some non-criticality condition on the coefficients and then gave a numerical algorithm for finding the solutions to the outer problem based on the Kohn--Vogelius functional and the augmented Lagrangian method. Furthermore, in \cite{CY2020}, they proved that there exist symmetry-breaking solutions of \eqref{odp} for certain critical values of $\si_c$. Similar problems involving two-phase conductors have been studied in several situations. We refer to \cite{MT1997A, MT1997B, CMS2009, CLM2012, L2014, CSU2019, Ca2020, camasa, Ca2021}. 

Let $(D,\Om)$ denote a solution of the overdetermined problem \eqref{odp}. One would expect that, if either $\sg_c \simeq 1$ or $D$ is small enough in some sense, then $\Om$ must be close to a ball (the solution of the one-phase problem). This was conjectured in the paper \cite{CY2019} from the numerical results. The purpose of this paper is to give quantitative stability estimates that show how close the solution $\Om$ is to a ball when either $\sg_c \simeq 1$ or $|D|$ is small.  
\begin{figure}[htbp]
\begin{minipage}{0.5\hsize}
\begin{center}
\includegraphics[width=50mm]{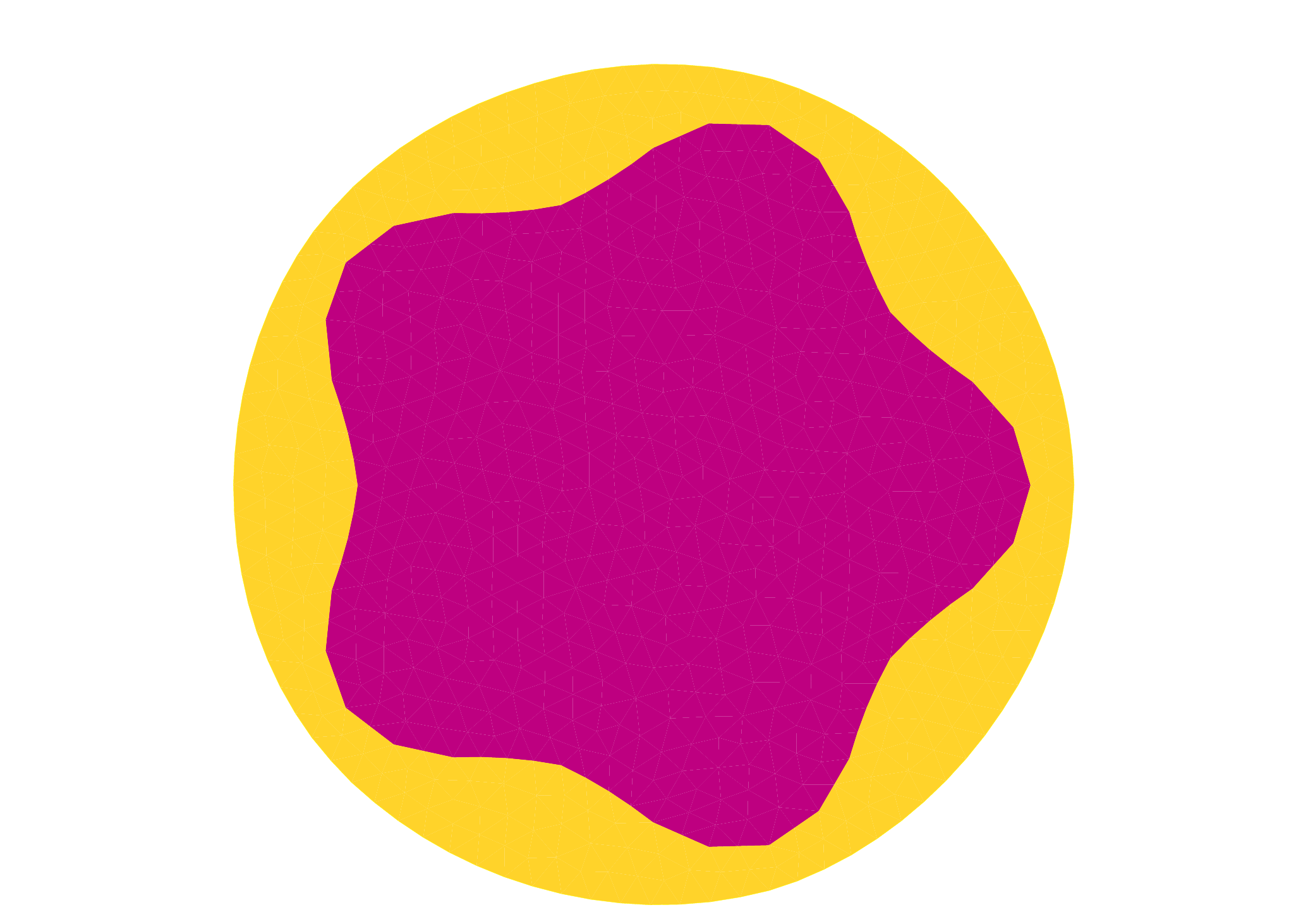}
\end{center}
\end{minipage}
\begin{minipage}{0.5\hsize}
\begin{center}
\includegraphics[width=50mm]{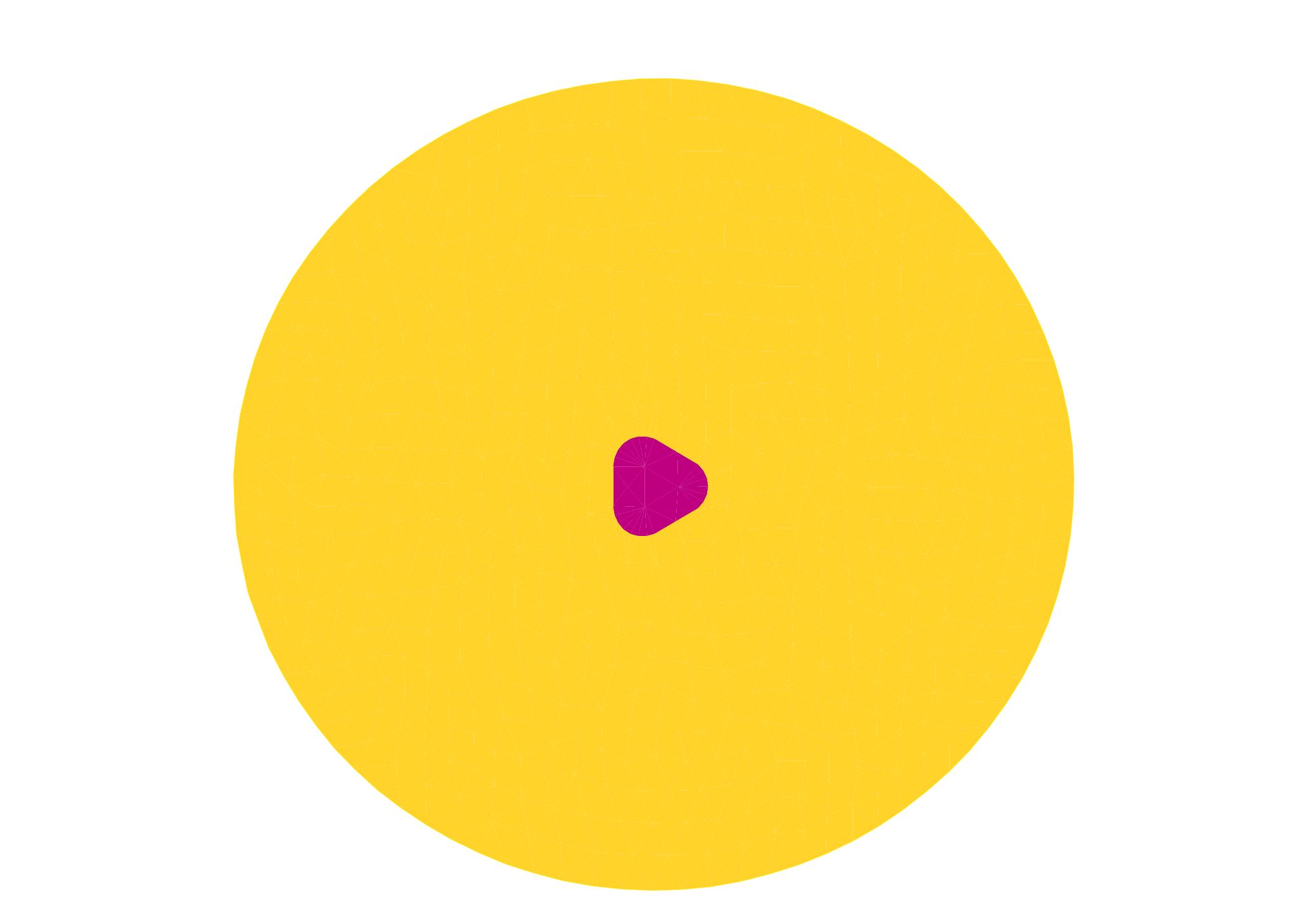}
\end{center}
\end{minipage}
\begin{minipage}{0.5\hsize}
\caption{Numerical result when $\sg_c \simeq 1$}
\label{fig:2}
\end{minipage}
\begin{minipage}{0.5\hsize}
\caption{Numerical result when $|D|$ is small}
\label{fig:3}
\end{minipage}
\end{figure}

We begin by setting some relevant notations. The diameter of $\Om$ is indicated by $d_\Om$. For a point $z\in\Om$, $\rho_i$ and $\rho_e$ will denote 
the radius of the largest ball contained in $\Om$
and that of the smallest ball that contains $\Om$, both centered at $z$ (see Figure \ref{fig:rhoirhoe}); in formulas, 
\begin{equation}
\label{def-rhos}
\rho_i=\min_{x\in \pa \Om }|x-z|  \ \mbox{ and } \ \rho_e=\max_{x\in \pa \Om }|x-z|.
\end{equation}
In what follows, the point $z$ will be always taken as later specified in Theorem \ref{thm:Improved-Serrin-stability}.

%
%

%
%

\begin{figure}[htbp]
\centering
\includegraphics[width=65mm]{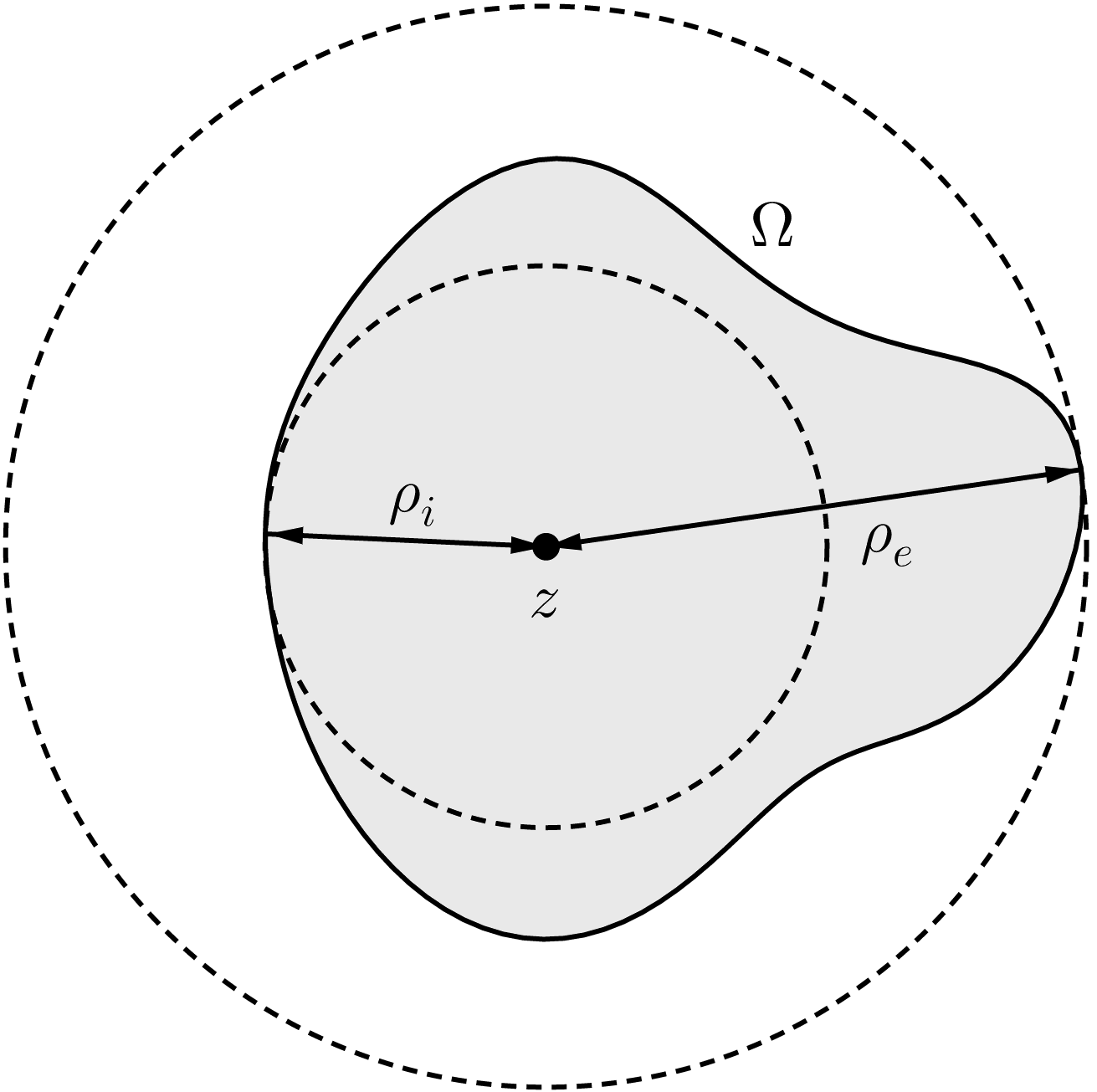}
\caption{$\rho_i$ and $\rho_e$.}
\label{fig:rhoirhoe}
\end{figure}

If $\pa\Om$ is of class $C^{1,\al}$ (see \cite[p.94]{GT} for a definition), 
then from the compactness of $\pa\Om$, there exist two positive constants $K$ and $\rho_0$ such that for all
$x \in \pa\Om$ and $0 < \rho \le  \rho_0$ there exists $x_0 \in\pa\Om$ and a one-to-one mapping $\Psi$ of $B_\rho(x_0)$ onto
$\om \subset \rn$ such that $x \in B_\rho(x_0)$ and
\begin{equation*}
\begin{aligned}
&\Psi \left(B_\rho(x_0) \cap \Om\right)\subset \{x_N>0\}, \quad  &\Psi\left(B_\rho(x_0) \cap \pa\Om\right)\subset \{x_N=0\},\\
&\norm{\Psi}_{C^{1,\al}(B_\rho(x_0))} \le K, \quad
&\norm{\Psi^{-1}}_{C^{1,\al}(\om)}\le K.   
\end{aligned}    
\end{equation*}
We will refer to the pair $(K,\rho_0)$ as the $C^{1,\al}$ modulus of $\pa\Om$ (see also \cite{ABR, BNST2008} for a similar definition in the case of $C^{2,\al}$ domains and \cite{LiVogelius} for another definition of the $C^{1,\al}$ modulus). 

%
%
%
In what follows, we state the main theorems of this paper. 
The following stability result for the one-phase problem will be crucial to establish quantitative stability estimates of the two-phase overdetermined problem \eqref{odp}. 

\begin{mainthm}[Stability for the one-phase problem with $L^2$ deviation in terms of the $C^{1,\al}$ modulus of $\pa \Om$]
\label{thm:Improved-Serrin-stability}
Let $\Om\subset\RR^N$ be a bounded domain with boundary $\pa \Om$ of class $C^{1, \al}$ and let $c$ be the constant defined in \eqref{eq:value of c}.
Let $v$ be the solution of \eqref{eq:2phase problem} with $\sigma_c = 1$ and 
let $z\in\Om$ be a point such that $v(z)=\displaystyle\max_{\ol\Om} v$.
%
%
Then, there exists a positive constant $C_1$ such that
\begin{equation}
\label{general improved stability serrin C}
\rho_e-\rho_i\le C_1 \,\nr \pa_n v - c \nr_{ L^2 ( \pa \Om ) }^{\tau_N} ,
\end{equation}
with the following specifications:
%
%
%
\begin{enumerate}[(i)]
\item $\tau_2 = 1$;
\item $\tau_3$ is arbitrarily close to one, in the sense that for any $\theta>0$, there exists a positive constant $C_1$ such that  \eqref{general improved stability serrin C} holds with $\tau_3 = 1- \theta$;
\item $\tau_N = 2/(N-1)$ for $N \ge 4$.
\end{enumerate}

The constant $C_1$ depends on $N$, $d_\Om$, the $C^{1,\al}$ modulus of $\pa \Om$,
and $\theta$ (only in the case $N=3$).
%
%
\end{mainthm}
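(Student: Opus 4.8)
The plan is to adapt to the $C^{1,\al}$ setting the integral-identity/$P$-function approach to Serrin's problem introduced by Weinberger and made quantitative by Magnanini and Poggesi. Since $-\dv(\na v)=1$ with $v=0$ on $\pa\Om$, Serrin's rigidity is encoded in the pointwise Cauchy--Schwarz deficit $|D^2 v|^2-(\Delta v)^2/N\ge0$, which vanishes exactly when $v$ is a radial paraboloid and $\Om$ is a ball. I would therefore fix the comparison function $h(x)=(a^2-|x-z|^2)/(2N)$ centered at the maximum point $z$, so that $-\Delta h=1$ and $\na h(z)=0=\na v(z)$, and set $w:=v-h$. A direct computation gives $\Delta w=0$ together with the algebraic identity
\[
|D^2 w|^2=|D^2 v|^2-\frac{(\Delta v)^2}{N},
\]
so the whole problem reduces to showing that the harmonic function $w$ is nearly constant on $\pa\Om$. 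Indeed, $v=0$ on $\pa\Om$ forces $w=-h$ there, whence
\[
\osc_{\pa\Om} w=\frac{\rho_e^2-\rho_i^2}{2N}=\frac{(\rho_e+\rho_i)(\rho_e-\rho_i)}{2N},
\]
so that, using a lower bound on $\rho_e+\rho_i$ coming from $d_\Om$ and the $C^{1,\al}$ modulus, $\rho_e-\rho_i\le C\,\osc_{\pa\Om}w$. It thus suffices to bound $\osc_{\pa\Om}w$ by a power of $\nr \pa_n v-c \nr_{L^2(\pa\Om)}$.

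The first analytic step is a Rellich--Pohozaev-type integral identity for the weighted deficit
\[
\cO:=\int_\Om(-v)\,|D^2 w|^2\,dx=\int_\Om(-v)\Big(|D^2 v|^2-\tfrac{(\Delta v)^2}{N}\Big)\,dx.
\]
Integrating by parts the divergence identity associated with the field $(x-z)\cdot\na v\,\na v-\tfrac12|\na v|^2(x-z)$, and using $v=0$ and $|\na v|=|\pa_n v|$ on $\pa\Om$, one reduces $\cO$ to a boundary integral of $(\pa_n v-c)$ against bounded factors; since the $C^{1,\al}$ modulus furnishes an a priori bound for $|\na v|$ up to $\pa\Om$, I expect to obtain $\cO\le C\,\nr \pa_n v-c \nr_{L^2(\pa\Om)}^2$. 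This is the point at which the $C^{1,\al}$ hypothesis enters quantitatively, in place of the $C^{2,\al}$-type bounds used previously.

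The second step transfers the interior information encoded in $\cO$ to the boundary. Because $(-v)$ is comparable to $\de(x)=\dist(x,\pa\Om)$ near $\pa\Om$ (Hopf's lemma together with the $C^{1,\al}$ modulus), a weighted Poincaré/Hardy inequality tailored to the harmonic function $w$ converts the degenerate quantity $\cO$ into a non-degenerate interior bound of the form $\nr \na w-\overline{\na w} \nr_{L^2(\Om)}^2\le C\,\cO$, the mean $\overline{\na w}$ being pinned down via $\na w(z)=0$ and harmonicity. Combined with the previous step this gives $\nr \na w \nr_{L^2(\Om)}\le C\,\nr \pa_n v-c \nr_{L^2(\pa\Om)}$, after which it remains to bound $\osc_{\pa\Om}w$ by a power of $\nr \na w \nr_{L^2(\Om)}$ through a quantitative estimate for $w$ up to the boundary.

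The main obstacle, and the source of the exponents $\tau_N$, is precisely this final passage from an interior $L^2$ bound to the boundary oscillation of a harmonic function on a merely $C^{1,\al}$ domain. Heuristically it is governed by the borderline Sobolev embedding on the codimension-one manifold $\pa\Om$: since $\na v$ is purely normal on $\pa\Om$, the tangential gradient satisfies $\na_{\tan}w=-\na_{\tan}h$, and the oscillation of $w$ along $\pa\Om$ is controlled by the $L^2(\pa\Om)$-norm of this tangential gradient, which on the $(N-1)$-dimensional boundary is exactly the critical case of $H^1\hookrightarrow L^\infty$. For $N=2$ the boundary is one-dimensional, the embedding holds with no loss, and one obtains the Lipschitz rate $\tau_2=1$; for $N\ge4$ the boundary has dimension $N-1\ge3$ and interpolating the Sobolev exponent $2(N-1)/(N-3)$ against the uniform gradient bound from the $C^{1,\al}$ modulus yields $\tau_N=2/(N-1)$; and $N=3$ sits precisely at the failure of $H^1\hookrightarrow L^\infty$ in dimension two, so the rate can be reached only up to an arbitrarily small loss $\tau_3=1-\theta$. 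Carrying this out rigorously — in particular realizing the heuristic tangential-gradient control from the interior bound $\nr \na w \nr_{L^2(\Om)}$ despite the degeneracy of the weight and the limited ($C^{1,\al}$) boundary regularity, and tracking the dependence of every constant on $N$, $d_\Om$, the $C^{1,\al}$ modulus, and $\theta$ — is the technical heart of the argument.
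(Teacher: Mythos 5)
Your overall strategy is the one the paper actually follows (both are adaptations of the Magnanini--Poggesi scheme): the harmonic difference $w=v-h$, the identity $\osc_{\pa\Om}w=(\rho_e^2-\rho_i^2)/(2N)$, the weighted Hessian deficit $\int_\Om v\,|D^2w|^2$, the Hopf--Oleinik growth $v\ge C\,\de_{\pa\Om}$ in the $C^{1,\al}$ setting, and a weighted Poincar\'e/interpolation step producing the exponents $\tau_N$. (A minor point first: with the normalization $-\De v=1$, $v=0$ on $\pa\Om$, the maximum principle gives $v>0$ in $\Om$, so the correct weight is $v$, not $-v$.) The first genuine gap is in your claim that the integral identity ``reduces $\cO$ to a boundary integral of $(\pa_n v-c)$ against bounded factors'' and hence yields $\cO\le C\,\Vert\pa_n v-c\Vert_{L^2(\pa\Om)}^{2}$. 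The boundary term is $\tfrac12\int_{\pa\Om}\left(c^2-(\pa_n v)^2\right)\pa_n w\,dS_x$, and estimating $(c+\pa_n v)\,\pa_n w$ by a constant produces only the \emph{linear} bound $\cO\le C\,\Vert\pa_n v-c\Vert_{L^2(\pa\Om)}$, which would halve every exponent in the theorem. The quadratic bound genuinely requires the weighted trace inequality $\Vert\pa_n w\Vert_{L^2(\pa\Om)}^2\le C\int_\Om v\,|D^2w|^2$ (the paper's \eqref{eq:trace inequality pan h}, obtained by adapting \cite[Lemma 2.5]{MP3} via the Hopf--Oleinik growth), followed by H\"older and an absorption argument giving first $\Vert\pa_n w\Vert_{L^2(\pa\Om)}\le C\,\Vert\pa_n v-c\Vert_{L^2(\pa\Om)}$ and only then the quadratic estimate; note that $\pa_n w$ is not pointwise small (it vanishes only when $\Om$ is a ball), so it cannot be absorbed into a ``bounded factor.''

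The second gap is the passage from the interior bound to $\osc_{\pa\Om}w$, which you yourself defer as the technical heart. Your heuristic --- controlling $\osc_{\pa\Om}w$ by the tangential gradient of $w$ on $\pa\Om$ and invoking the critical Sobolev embedding on the $(N-1)$-dimensional boundary --- is not how the exponents actually arise, and it is unclear how one would control $\na_{\tan}w$ on $\pa\Om$ in $L^2$ starting from the degenerate interior quantity $\int_\Om v\,|D^2w|^2$ on a merely $C^{1,\al}$ boundary. The paper instead works entirely in the solid domain: it uses the interpolating inequality $\osc_{\pa\Om}h\le C\,\Vert\na h\Vert_{L^\infty(\Om)}^{N/(N+p)}\Vert h-h_\Om\Vert_{L^p(\Om)}^{p/(N+p)}$ from \cite{MP4Interpolating}, combined with weighted Poincar\'e inequalities valid on John domains and a Morrey--Sobolev inequality, and the exponents $\tau_N$ (including the $\theta$-loss at $N=3$) come from optimizing the parameter $p$ in that chain. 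Also, to turn your weighted Poincar\'e output $\Vert\na w-\overline{\na w}\Vert_{L^2}$ into $\Vert\na w\Vert_{L^2}$ you cannot simply invoke $\na w(z)=0$: relating the mean of $\na w$ to its value at one interior point requires an additional quantitative step. Without these ingredients the proposal does not yet yield the stated rates.
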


\begin{remark}
The proof of Theorem \ref{thm:Improved-Serrin-stability} relies on (and is hugely an adaptation of) the techniques developed by Magnanini and the second author in \cite{PogTesi, MP2, MP3}.
When the $C^{1, \al}$ modulus of $\pa \Om$ is replaced by the uniform interior and exterior touching ball condition, Theorem \ref{thm:Improved-Serrin-stability} 
%
%
is contained in \cite{PogTesi,MP3}.
We point out that Theorem \ref{thm:Improved-Serrin-stability} provides a new extension of
\cite[Theorem 3.1]{MP3} in which the constant $C_1$ appearing in \eqref{general improved stability serrin C} depends on the $C^{1, \al}$ modulus of $\pa \Om$ instead of the radii of the uniform interior and exterior touching ball condition (as it happened in \cite{MP3}). We stress that the uniform interior and exterior touching ball condition is equivalent to the $C^{1,1}$ regularity of $\pa \Om$ (see, for instance, \cite[Theorem 1.0.9]{Barb} or \cite[Corollary 3.14]{ABMMZ}). The weaker $C^{1,\al}$ (with $0< \al <1$) regularity that we are considering here, is equivalent to a uniform interior and exterior touching pseudoball condition (see \cite[Theorem 1.3 and Corollary 3.14]{ABMMZ}).
\end{remark}

Thanks to Theorem \ref{thm:Improved-Serrin-stability},
%
%
we can obtain quantitative stability estimates for the two-phase overdetermined problem \eqref{odp} when $\sg_c \simeq 1$ and $|D|$ is small. 

\begin{mainthm}[Stability for $\sg_c \simeq 1$]\label{thm I}
Let $\Om\subset\RR^N$ be a bounded domain 
%
%
and let $D$ be an open set satisfying $\ol D\subset\Om$. Moreover, suppose that the pair $(D,\Om)$ is a solution to the overdetermined problem \eqref{odp}. 
%
%
Then,
we have that
\begin{equation*}
\rho_e-\rho_i\le C_2 |\sg_c-1|^{\tau_N},   
\end{equation*}
where $\tau_N$ is defined as in Theorem \ref{thm:Improved-Serrin-stability} and the constant $C_2 > 0$ depends on $N$, $d_\Om$, the $C^{1,\al}$ modulus of the boundary $\pa\Om$, and $\theta$ (only in the case $N=3$).
%
%
\end{mainthm}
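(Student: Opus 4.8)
The plan is to derive the two-phase estimate from the one-phase stability result of Theorem~\ref{thm:Improved-Serrin-stability} by a comparison argument. Write $u$ for the solution of the two-phase problem \eqref{odp} and let $v$ be the solution of the one-phase problem, i.e.\ \eqref{eq:2phase problem} with $\sg_c=1$, on the same domain $\Om$; by definition $z$ is the point where $v$ attains its maximum, so that $\rho_i,\rho_e$ are computed with respect to the same centre in both theorems and Theorem~\ref{thm:Improved-Serrin-stability} applies verbatim. Since $\rho_e-\rho_i\le d_\Om$ always holds, we may assume $|\sg_c-1|\le\tfrac12$ (otherwise the claim follows trivially after enlarging $C_2$). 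The whole problem then reduces to proving the quantitative closeness of the two Neumann data,
\begin{equation*}
\|\pa_n v-c\|_{L^2(\pa\Om)}\le C\,|\sg_c-1|,
\end{equation*}
after which the conclusion is immediate by inserting this into \eqref{general improved stability serrin C}.

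The first step is an energy estimate for $w:=u-v\in H^1_0(\Om)$. Subtracting the weak formulations of the two Dirichlet problems and using $\sg\na u-\na v=\na w+(\sg_c-1)\chi_D\na u$ gives, for every $\fhi\in H^1_0(\Om)$,
\begin{equation*}
\int_\Om \na w\cdot\na\fhi\,dx=-(\sg_c-1)\int_D \na u\cdot\na\fhi\,dx.
\end{equation*}
Choosing $\fhi=w$ and applying Cauchy--Schwarz yields $\|\na w\|_{L^2(\Om)}\le |\sg_c-1|\,\|\na u\|_{L^2(D)}$. Absorbing $\|\na w\|$ on the left produces, in the regime $|\sg_c-1|\le\tfrac12$, the uniform bound $\|\na u\|_{L^2(\Om)}\le 2\|\na v\|_{L^2(\Om)}$, with $\|\na v\|_{L^2(\Om)}$ depending only on $\Om$; hence
\begin{equation*}
\|\na w\|_{L^2(\Om)}\le C\,|\sg_c-1|.
\end{equation*}

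The decisive point is to convert this interior $H^1$ bound into a bound for the $L^2$ norm of the normal derivative on $\pa\Om$. Here we exploit that $\ol D\subset\Om$: in the collar $\Om\setminus\ol D$ one has $\sg\equiv 1$, so $-\De u=-\De v=1$ there and hence $w$ is \emph{harmonic} in a fixed neighbourhood of $\pa\Om$, with $w=0$ on $\pa\Om$. Combining interior gradient estimates for harmonic functions (to control $w$ and $\na w$ on an interior surface at fixed distance from $\pa\Om$ in terms of $\|\na w\|_{L^2(\Om)}$) with the boundary Schauder regularity granted by the $C^{1,\al}$ modulus of $\pa\Om$, one obtains a trace estimate of the form
\begin{equation*}
\|\pa_n w\|_{L^2(\pa\Om)}\le C\,\|\na w\|_{L^2(\Om)};
\end{equation*}
a convenient way to make this rigorous is a Rellich--Pohozaev identity on the collar, tested against a vector field that equals $n$ on $\pa\Om$ and is cut off before reaching $\ol D$, which expresses $\int_{\pa\Om}(\pa_n w)^2$ through interior gradient terms. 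This trace step is where the $C^{1,\al}$ regularity is genuinely used, and it is the main technical obstacle (in particular because $w$ is only harmonic away from $\ol D$, so the geometry of the collar enters the constants). Finally, the overdetermined condition $\pa_n u=c$ on $\pa\Om$ gives $\pa_n v-c=-\pa_n w$, so the three displays combine to $\|\pa_n v-c\|_{L^2(\pa\Om)}\le C\,|\sg_c-1|$; plugging this into \eqref{general improved stability serrin C} yields $\rho_e-\rho_i\le C_2\,|\sg_c-1|^{\tau_N}$, as claimed.
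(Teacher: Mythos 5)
Your overall strategy---compare $u$ with the one-phase solution $v$ on the same $\Om$, bound the Neumann deviation of $v$ linearly in $|\sg_c-1|$, and feed it into the one-phase stability estimate---matches the paper's in outline, and your energy estimate $\nr\na w\nr_{L^2(\Om)}\le C\,|\sg_c-1|$ is correct with a constant independent of $D$. The gap is the trace step, which you yourself flag as the main obstacle and leave as a sketch. Any inequality of the form $\nr\pa_n w\nr_{L^2(\pa\Om)}\le C\nr\na w\nr_{L^2(\Om)}$ that exploits only the harmonicity of $w$ in the collar $\Om\setminus\ol D$ comes with a constant depending on the width of that collar, i.e.\ on $\dist(\ol D,\pa\Om)$: the cut-off vector field in your Rellich identity has derivatives of size $\dist(\ol D,\pa\Om)^{-1}$, and the model $w=\ve\sin(kx_1)\sinh(kx_N)$ on a slab of width $\de$ gives $\nr\pa_n w\nr_{L^2}^2\big/\nr\na w\nr_{L^2}^2=2k/\sinh(2k\de)\sim 1/\de$ for $k\sim 1/\de$, so no uniform trace bound holds for functions merely harmonic in a thin collar and vanishing on its outer boundary. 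Since the statement allows $D$ to be an arbitrary open set with $\ol D\subset\Om$ and asserts that $C_2$ depends only on $N$, $d_\Om$, the $C^{1,\al}$ modulus of $\pa\Om$ (and $\theta$), your argument proves a strictly weaker theorem. This is precisely the point of the remark following the proof of Theorem~\ref{thm II}: the comparison-function route (which is how the paper proves Theorem~\ref{thm II}, under the explicit hypothesis $\dist(D,\pa\Om)\ge 1/M$) yields a version of Theorem~\ref{thm I} whose constant also depends on $\dist(D,\pa\Om)$ and $|D|$.

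The paper avoids this by differentiating in the parameter $t=\sg_c-1$ rather than estimating the difference $u-v$ directly: Lemma~\ref{frechet differentiability} uses the implicit function theorem in the space $H^1_0(\Om)\cap C^{1,\al}(\ol\Om\setminus U)$ to show that $t\mapsto u(t)$ is Fr\'echet differentiable, and then the fundamental theorem of calculus combined with Schauder estimates for $\pa_n u'(\tau)$ yields the uniform bound $\nr\pa_n v-c\nr_{L^\infty(\pa\Om)}\le C_7|t_0|$, which is inserted into Corollary~\ref{cor:Improved-Serrin-stability-deviation in uniformnorm}. To salvage your argument you would need either to add the hypothesis $\dist(D,\pa\Om)\ge 1/M$ (accepting the weaker conclusion, essentially Theorem~\ref{thm II} with $|D|^{1/2}$ replaced by $|\sg_c-1|$), or to replace the collar trace argument by one that controls $\pa_n w$ on $\pa\Om$ without passing through the geometry of $\Om\setminus\ol D$.
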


\begin{mainthm}[Stability for $|D|$ small]\label{thm II}
Let $\Om\subset\RR^N$ be a bounded domain 
%
%
and let $D$ be an open set satisfying $\ol D\subset\Om$. Moreover, suppose that the pair $(D,\Om)$ is a solution to the overdetermined problem \eqref{odp}. 
%
%
Then, 
we have that
\begin{equation*}
\rho_e-\rho_i\le C_3 |D|^{\frac{\tau_N}{2}},   
\end{equation*}
where $\tau_N$ is defined as in Theorem \ref{thm:Improved-Serrin-stability} and the constant $C_3 > 0$ depends on $N$, $d_\Om$, $\sg_c$, the $C^{1,\al}$ modulus of the boundary $\pa\Om$, the distance between $\ol D$ and $\pa\Om$, and $\theta$ (only in the case $N=3$).
\end{mainthm}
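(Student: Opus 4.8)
The plan is to deduce Theorem~\ref{thm II} directly from the one-phase stability estimate of Theorem~\ref{thm:Improved-Serrin-stability}, the point being that the overdetermined condition $\pa_n u = c$ on $\pa\Om$ converts the quantity $\nr \pa_n v - c\nr_{L^2(\pa\Om)}$ appearing in \eqref{general improved stability serrin C} into a measure of how far the two-phase solution $u$ is from the one-phase solution $v$. Since $c$ is the same constant \eqref{eq:value of c} for both problems (by integrating the respective equations and using $\sg\equiv 1$ on $\pa\Om$), setting $w := u - v$ and using $\pa_n u = c$ gives $\nr\pa_n v - c\nr_{L^2(\pa\Om)} = \nr\pa_n w\nr_{L^2(\pa\Om)}$, so that Theorem~\ref{thm:Improved-Serrin-stability} yields $\rho_e - \rho_i \le C_1 \nr\pa_n w\nr_{L^2(\pa\Om)}^{\tau_N}$ (with $z$ the maximum point of $v$, as prescribed there). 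It then suffices to show $\nr\pa_n w\nr_{L^2(\pa\Om)} \le C|D|^{1/2}$ with $C$ having the stated dependences.

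First I would derive the equation solved by $w$. Subtracting $-\De v = 1$ from $-\dv(\sg\gr u)=1$ and using $\sg = 1 + (\sg_c-1)\chi_D$, one finds
\[
-\dv(\sg\gr w) = (\sg_c - 1)\,\dv(\chi_D\,\gr v) \quad\text{in }\Om, \qquad w = 0 \text{ on }\pa\Om.
\]
Testing this equation with $w$ and integrating by parts (the boundary terms vanish since $w=0$ on $\pa\Om$ and the right-hand side is supported in $\ol D$) gives the energy identity $\int_\Om \sg|\gr w|^2 = -(\sg_c-1)\int_D \gr v\cdot\gr w$. Since $\sg \ge \min\{1,\sg_c\} > 0$, Cauchy--Schwarz yields
\[
\nr\gr w\nr_{L^2(\Om)} \le \frac{|\sg_c-1|}{\min\{1,\sg_c\}}\,\nr\gr v\nr_{L^2(D)}.
\]
Because $-\De v = 1$ and $\ol D$ lies at positive distance from $\pa\Om$, interior elliptic estimates bound $\nr\gr v\nr_{L^\infty(D)}$ by a constant depending on $N$, $d_\Om$ and $\dist(\ol D,\pa\Om)$, whence $\nr\gr v\nr_{L^2(D)} \le \nr\gr v\nr_{L^\infty(D)}\,|D|^{1/2} \le C|D|^{1/2}$ and therefore $\nr\gr w\nr_{L^2(\Om)} \le C|D|^{1/2}$.

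The remaining, and main, step is to pass from this interior energy bound to the boundary bound on $\pa_n w$. The key observation is that in the collar $\{x\in\Om : \dist(x,\pa\Om) < \dist(\ol D,\pa\Om)\}$ one has $\sg\equiv 1$ and $\chi_D\equiv 0$, so $w$ is harmonic there and vanishes on $\pa\Om$. Covering $\pa\Om$ by finitely many balls of radius comparable to $\min\{\rho_0, \dist(\ol D,\pa\Om)\}$ and applying boundary gradient estimates for harmonic functions with zero Dirichlet data on a $C^{1,\al}$ boundary, I would obtain $\nr\pa_n w\nr_{L^\infty(\pa\Om)} \le C\nr w\nr_{L^2(\Om)}$ with $C$ depending only on $N$, the $C^{1,\al}$ modulus $(K,\rho_0)$, and $\dist(\ol D,\pa\Om)$. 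Combining this with the Poincar\'e inequality $\nr w\nr_{L^2(\Om)}\le C\nr\gr w\nr_{L^2(\Om)}$ and bounding $|\pa\Om|$ in terms of $d_\Om$ and the $C^{1,\al}$ modulus gives $\nr\pa_n w\nr_{L^2(\pa\Om)} \le |\pa\Om|^{1/2}\nr\pa_n w\nr_{L^\infty(\pa\Om)} \le C|D|^{1/2}$. Feeding this into Theorem~\ref{thm:Improved-Serrin-stability} produces $\rho_e-\rho_i \le C_3|D|^{\tau_N/2}$ with $C_3$ depending on $N$, $d_\Om$, $\sg_c$, the $C^{1,\al}$ modulus, $\dist(\ol D,\pa\Om)$ and $\theta$ (when $N=3$), as claimed. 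The delicate point is ensuring that the boundary gradient estimate is uniform and controlled solely by the $C^{1,\al}$ modulus rather than by finer features of $\pa\Om$; this is precisely the regularity input for which the $C^{1,\al}$ setting (as opposed to the touching-ball/$C^{1,1}$ setting) must be handled with care, in parallel with the analysis underlying Theorem~\ref{thm:Improved-Serrin-stability}.
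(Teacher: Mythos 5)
Your argument is correct, and it reaches the same intermediate goal as the paper --- the uniform bound $\nr \pa_n v - c\nr_{L^\infty(\pa\Om)} \le C\,|D|^{1/2}$ followed by an application of the one-phase stability estimate (the paper actually invokes its Corollary \ref{cor:Improved-Serrin-stability-deviation in uniformnorm} at this point, which is just \eqref{general improved stability serrin C} combined with $|\pa\Om|\le |\Om|/k$) --- but by a genuinely different route. The paper writes $-\De w = \dv\bigl((\sg_c-1)\chi_D\na u\bigr)$, keeps the two-phase solution $u$ on the right-hand side, represents $w$ via Green's function, and bounds $|\na_x w|$ on $\pa\Om$ by combining a pointwise estimate $|\na_x\na_y G(x,y)|\le C^* M^{N+1}$ for $x$ away from $D$ (Lemma \ref{esti for green}) with Cauchy--Schwarz and a global energy bound $\nr\na u\nr_{L^2(\Om)}\le |\Om|^{1/2}\la_1(\Om)^{-1/2}\min\{\sg_c,1\}^{-1}$, the eigenvalue being controlled via Faber--Krahn; the global energy route is forced there because $u$ is only $H^1$ across $\pa D$ and no interior $L^\infty$ bound on $\na u$ in $D$ is available without regularity of $\pa D$. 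You instead put the perturbation on $\na v$, which is smooth in the interior, so a plain interior gradient estimate gives $\nr\na v\nr_{L^\infty(D)}\le C(N,d_\Om,\dist(\ol D,\pa\Om))$ and the energy identity for $w$ yields $\nr\na w\nr_{L^2(\Om)}\le C|D|^{1/2}$ with no Green's function and no eigenvalue input; you then recover the boundary bound from local Schauder estimates plus the local boundary maximum principle for the harmonic function $w$ in the collar where $\sg\equiv 1$. This is arguably more elementary, and the regularity machinery you need (local $C^{1,\al}$ boundary estimates and $L^\infty$ bounds, uniform in the $C^{1,\al}$ modulus) is exactly what the paper itself already uses in Lemma \ref{esti for green}, so nothing new is being smuggled in. Two small points of care: take the collar of width $\tfrac12\dist(\ol D,\pa\Om)$ rather than the full distance, so that $w$ is harmonic in a neighborhood of its closure minus $\pa\Om$; and note that the global estimate of \cite[Theorem 8.33]{GT} does not apply directly to $w$ (the coefficient $\sg$ is discontinuous), so the local boundary version \cite[Corollary 8.36]{GT} together with the local boundary maximum principle \cite[Theorem 8.25]{GT} is the correct citation --- which is what your covering argument implicitly does. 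The exponent, the final estimate, and the list of dependences of $C_3$ all come out the same as in the paper.
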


\begin{remark}[On the regularity]
Even without imposing any regularity assumptions on $\pa \Om$ (in Theorems \ref{thm I} and \ref{thm II}), \cite[Theorem 1]{Vo} guarantees that if $u$ satisfies \eqref{odp} (where the boundary conditions are interpreted in the appropriate weak sense), then $\pa \Om$ is of class $C^{2,\ga}$, with~$0 < \ga < 1$.
In particular, the $C^{1, \al}$ modulus of $\pa \Om$ is well defined, and the notation~$\pa_n u=c$ on $\pa \Om$ is well posed in the classical sense. 
Furthermore, the regularity of $\pa\Om$ can be bootstrapped even more. Indeed, once one knows that $(D,\Om)$ is a classical solution of \eqref{odp}, then the local result \cite[Theorem 2]{Kinderlehrer Nirenberg} implies that $\pa \Om$ must be an analytic surface. 
\end{remark}

\begin{remark}
Theorem \ref{thm II} should be compared with the results obtained (with a different approach) by Dipierro, Valdinoci, and the second author in \cite{DPV}. Although the results in \cite{DPV} apply to the more general setting in which the equation is not known (and could be arbitrary) in $D$, in the case of the two-phase problem \eqref{odp} considered here, Theorem \ref{thm II} provides substantial improvements. First, in \cite{DPV} the closeness of $\Om$ to a ball is controlled by $|\pa D |$, while Theorem \ref{thm II} provides a stronger control in terms of $|D|$. Also, the constant $C$ appearing in the estimates in \cite{DPV} also depends on the $C^{2}$ norm of $u$ on $\pa D$, and that dependence does not appear in Theorem \ref{thm II}. We mention that, in the present setting, such regularity of $u$ up to $\pa D$ would be available at the cost of assuming some regularity of $\pa D$ (see \cite{XB2013}), which is not assumed in Theorem \ref{thm II}.
\end{remark}

%
%
From Theorem \ref{thm I} and \ref{thm II}, we can show the non-existence for the inner problem of the two-phase overdetermined problem \eqref{odp} when $\sg_c \simeq 1$ and $|D|$ is small. 
\begin{maincorol}[Non-existence for $\sg_c \simeq 1$]\label{thm III}
Let $\Om\subset\rn$ be a bounded domain 
and suppose that $\Om$ is not a ball (that is, $\rho_e-\rho_i>0$). Then, the overdetermined problem \eqref{odp} does not admit a solution of the form $(D,\Om)$ if \begin{equation*}
    |\sg_c-1| < 
    C_4\left( {\rho_e-\rho_i} \right)^{\frac{1}{\tau_N}}, 
\end{equation*}
where $\tau_N$ is defined as in Theorem \ref{thm:Improved-Serrin-stability} and the constant $C_4$ can be explicitly written as 
\begin{equation*}
C_4=\left(C_2\right)^{-1/\tau_N},
\end{equation*}
where $C_2$ is the constant that appears in the statement of Theorem \ref{thm I}.
\end{maincorol}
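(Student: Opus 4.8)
The plan is to derive the statement as the contrapositive of Theorem \ref{thm I}, via a short argument by contradiction. Suppose that $\Om$ is not a ball, so that $\rho_e-\rho_i>0$, and suppose toward a contradiction that the overdetermined problem \eqref{odp} nonetheless admits a solution $(D,\Om)$ for which $|\sg_c-1| < C_4\,(\rho_e-\rho_i)^{1/\tau_N}$, with $C_4=(C_2)^{-1/\tau_N}$.

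First I would apply Theorem \ref{thm I} to this solution: since $(D,\Om)$ solves \eqref{odp}, the theorem provides the non-strict upper bound $\rho_e-\rho_i\le C_2\,|\sg_c-1|^{\tau_N}$. Next I would substitute the standing hypothesis on $|\sg_c-1|$. As $\tau_N>0$ and both quantities are positive, the map $t\mapsto t^{\tau_N}$ is strictly increasing, so raising the hypothesized strict inequality to the power $\tau_N$ gives $|\sg_c-1|^{\tau_N} < C_4^{\tau_N}(\rho_e-\rho_i)$. Multiplying by the positive constant $C_2$ and invoking the defining relation $C_2\,C_4^{\tau_N}=C_2\cdot(C_2)^{-1}=1$, the right-hand side collapses to exactly $\rho_e-\rho_i$. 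Combining this with the bound from Theorem \ref{thm I} yields the chain \[ \rho_e-\rho_i\le C_2\,|\sg_c-1|^{\tau_N} < \rho_e-\rho_i, \] which is a contradiction; hence no such solution $(D,\Om)$ can exist.

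There is no genuine analytic obstacle here, as all the substantive work is already carried out in Theorem \ref{thm I}; what remains is purely algebraic bookkeeping. The only points deserving attention are that $\tau_N$ is strictly positive in every dimension (one has $\tau_2=1$, $\tau_3=1-\theta\in(0,1)$, and $\tau_N=2/(N-1)>0$ for $N\ge4$), which guarantees both that $t\mapsto t^{\tau_N}$ is increasing and that $C_4=(C_2)^{-1/\tau_N}$ is a well-defined positive constant, and that it is precisely the strictness of the assumed inequality on $|\sg_c-1|$ that upgrades the non-strict estimate of Theorem \ref{thm I} into the strict contradiction above. The particular value $C_4=(C_2)^{-1/\tau_N}$ is dictated by the requirement that the constants cancel exactly to $1$, which is what makes the two outer terms of the chain coincide.
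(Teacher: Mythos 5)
Your argument is correct and is essentially the paper's own proof: both proceed by contradiction, apply Theorem \ref{thm I} to the hypothetical solution, and use the cancellation $C_2\,C_4^{\tau_N}=1$ to obtain the strict chain $\rho_e-\rho_i\le C_2\,|\sg_c-1|^{\tau_N}<\rho_e-\rho_i$. Your added remarks on the positivity of $\tau_N$ and the role of strictness are harmless elaborations of the same reasoning.
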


\begin{maincorol}[Non-existence for $|D|$ small]\label{thm IV}
Let $\Om\subset\rn$ be a bounded domain 
and suppose that $\Om$ is not a ball (that is, $\rho_e-\rho_i>0$). Then, the overdetermined problem \eqref{odp} does not admit a solution of the form $(D,\Om)$ if \begin{equation*}
    |D| < 
    C_5\left( {\rho_e-\rho_i} \right)^{\frac{2}{\tau_N}}. 
\end{equation*}
where $\tau_N$ is defined as in Theorem \ref{thm:Improved-Serrin-stability} and the constant $C_5$ can be explicitly written as 
\begin{equation*}
C_5=\left(C_3\right)^{-2/\tau_N},
\end{equation*}
where $C_3$ is the constant that appears in the statement of Theorem \ref{thm II}.
\end{maincorol}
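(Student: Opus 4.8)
The plan is to argue by contraposition, deducing the non-existence statement directly from the quantitative stability estimate of Theorem \ref{thm II}. Suppose, for contradiction, that the pair $(D,\Om)$ were a solution to the overdetermined problem \eqref{odp}. Then Theorem \ref{thm II} would be applicable and would yield the bound
\begin{equation*}
\rho_e-\rho_i\le C_3\,|D|^{\frac{\tau_N}{2}},
\end{equation*}
with $C_3$ the constant appearing in that theorem. The entire content of the corollary is then a rearrangement of this single inequality.

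The key algebraic step is to isolate $|D|$ on one side. Since $\tau_N>0$ in every dimension (by the specifications $\tau_2=1$, $\tau_3=1-\theta$ with $\theta$ small, and $\tau_N=2/(N-1)$ for $N\ge 4$ in Theorem \ref{thm:Improved-Serrin-stability}), the map $t\mapsto t^{2/\tau_N}$ is strictly increasing on $[0,\infty)$, so raising both sides of the displayed inequality to the power $2/\tau_N$ preserves the ordering. Using that $\rho_e-\rho_i>0$ (because $\Om$ is assumed not to be a ball), one obtains
\begin{equation*}
(\rho_e-\rho_i)^{\frac{2}{\tau_N}}\le C_3^{\frac{2}{\tau_N}}\,|D|,
\end{equation*}
which upon dividing by the positive constant $C_3^{2/\tau_N}$ gives the lower bound
\begin{equation*}
|D|\ge C_3^{-\frac{2}{\tau_N}}\,(\rho_e-\rho_i)^{\frac{2}{\tau_N}}=C_5\,(\rho_e-\rho_i)^{\frac{2}{\tau_N}},
\end{equation*}
with $C_5=(C_3)^{-2/\tau_N}$ exactly as in the statement.

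This lower bound on $|D|$ for any admissible solution is the contrapositive of the claim: it says that whenever a solution $(D,\Om)$ with non-spherical $\Om$ exists, its inclusion must satisfy $|D|\ge C_5(\rho_e-\rho_i)^{2/\tau_N}$. Hence, if instead $|D|<C_5(\rho_e-\rho_i)^{2/\tau_N}$, no such solution can exist, which is precisely the assertion of the corollary. There is essentially no serious obstacle here, since all the analytic difficulty has already been absorbed into the proof of Theorem \ref{thm II}; the only point requiring care is the sign condition $\tau_N>0$, which legitimizes the monotone inversion of the power law and is guaranteed by the definitions in Theorem \ref{thm:Improved-Serrin-stability}.
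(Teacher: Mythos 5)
Your proposal is correct and follows essentially the same route as the paper, which derives the corollary from Theorem \ref{thm II} by contradiction (the paper omits the details, noting the argument is identical to that of Corollary \ref{thm III}). Your contrapositive rearrangement $|D|\ge C_5(\rho_e-\rho_i)^{2/\tau_N}$ is just an equivalent reformulation of the paper's step $\rho_e-\rho_i\le C_3|D|^{\tau_N/2}<C_3 C_5^{\tau_N/2}(\rho_e-\rho_i)^{}=\rho_e-\rho_i$, and the justification via $\tau_N>0$ is sound.
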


This paper is organized as follows. In Section \ref{pre}, we provide stability results for the one-phase problem and prove Theorem \ref{thm:Improved-Serrin-stability}. Section \ref{sec:thm I} is devoted to the proof of Theorem \ref{thm I} by the implicit function theorem for Banach spaces and a corollary of Theorem \ref{thm:Improved-Serrin-stability}. In Section \ref{sec:thm II}, we prove Theorem \ref{thm II} by a perturbation argument using Green's function of the Dirichlet boundary value problem for the Laplace operator and a corollary of Theorem \ref{thm:Improved-Serrin-stability}. In Section \ref{sec:nonexistence}, we show the non-existence for the inner problem of the two-phase overdetermined problem \eqref{odp} from Theorems \ref{thm I} and \ref{thm II}.


\section{Proof of Theorem \ref{thm:Improved-Serrin-stability}}\label{pre}
In this section, we consider $v$ solution of \eqref{eq:2phase problem} with $\sigma_c = 1$, that is,
\begin{equation}\label{serrin1}
- \De v = 1 \quad \text{ in } \Om, \quad v = 0 \quad \text{ on } \pa \Om .
\end{equation}
The stability issue for the classical Serrin's problem has been deeply studied by several authors in \cite{ABR, BNST2008, CMV, Fe, MP, Pog, PogTesi, MP2, MP3, gilsbach onodera, MP6NewInterpolating}. 
A more detailed overview and comparison of those results can be found in \cite{Mag, PogTesi, MP2, MP3}.

We now give the proof of Theorem \ref{thm:Improved-Serrin-stability}. 
\begin{proof}[Proof of Theorem \ref{thm:Improved-Serrin-stability}]
As already mentioned, the result with the $C^{1, \al}$ modulus of $\pa \Om$ replaced by the uniform interior and exterior touching ball condition has been obtained in \cite{PogTesi,MP3}. 
Here, we hugely exploit tools and techniques developed in \cite{PogTesi,MP3}, adapting them to our (more general) setting. More precisely, we are going to point out how to modify the proof of \cite[Theorem 3.1]{MP3} in the present setting, referring the reader to \cite{PogTesi,MP3} for the remaining details.
%
%

In this proof, we use the letter $C$ to denote a positive constant whose value could change by line to line; the parameters on which $C$ depends will be specified each time. The letter $c$ will always indicate the constant in \eqref{eq:value of c}.
 
{\it Step 1} (Fundamental identity). 
By following \cite{MP3} and taking into account that here a different normalization of \eqref{serrin1} is adopted, we introduce the function
$q(x)= - \frac{|x-z|^2}{2 N}$ (where $z$ is a global maximum point of $v$ in $\Om$) and the harmonic function $h = v-q$.
In the present setting, Identity $(3.1)$ in \cite{MP3} reads
\begin{equation}\label{eq:FI Classical Serrin}
	\int_{\Om} v \, |\na^2 h|^2  \, dx = \frac{1}{2} \int_{\pa \Om} \left(c^2 - (\pa_n v)^2 \right) \pa_n h \, dS_x ,  
\end{equation}
where $c$ is the constant given by \eqref{eq:value of c}.

Notice that, by definition $h$ is harmonic and, being $h=-q$ on $\pa \Om$, we have that
$$\osc_{\pa\Om} h := \max_{\pa \Om} h - \max_{\pa \Om} h = \frac{\rho_e^2 -\rho_i^2}{2N} .$$
This last relation and the inequality $\rho_e +\rho_i \ge \rho_e \ge d_\Om/2$ immediately lead to
\begin{equation}\label{eq:oscillation and rho}
\rho_e-\rho_i \le \frac{4N}{d_\Om} \osc_{\pa \Om} h.  	
\end{equation}

{\it Step 2} (Optimal growth of $v$ from the boundary). We prove that
\begin{equation}\label{eq:optimal growth}
	v(x) \ge C \, \de_{\pa \Om}(x) \quad \text{ for any } x \in \ol{ \Om } ,
\end{equation}
where $\de_{\pa \Om}(x):= \dist(x, \pa \Om)$ denotes the distance function to $\pa \Om$, and $C$ is a constant only depending on $N$ and the $C^{1,\al}$ modulus of $\pa \Om$.

By the Hopf-Olenik lemma for $C^{1,\al}$ domains\footnote{Hopf-Olenik Lemma for $C^{1,\al}$ domains is due to Giraud \cite{Giraud}. We refer to \cite[Section 4.1]{ABMMZ} for a historical perspective on this subject.} (see, for instance, the more general version contained in \cite[Theorem 4.4]{ABMMZ}), for any $x_0 \in \pa \Om$ we have that 
\begin{equation}\label{eq:Hopf-Olenik for C1alpha}
	v(x_0 - t \, n) \ge k \, t \quad \text{ for any }  0 < t < \de ,	
\end{equation}
where $k$ and $\de$ are two constants only depending on $N$ and the $C^{1,\al}$ modulus of $\pa \Om$.
This, together with the rough estimate
\begin{equation}\label{eq:rough growth}
v(x) \ge \frac{\de_{\pa \Om}(x)^2}{2N} \quad \text{ for any } x \in \ol{ \Om },
\end{equation}
easily leads to the global inequality \eqref{eq:optimal growth} with $C= \max\left\lbrace k, \frac{\de}{2N} \right\rbrace$, where $k$ and $\de$ are those in \eqref{eq:Hopf-Olenik for C1alpha}. For a proof of \eqref{eq:rough growth} see, for instance, the first claim in \cite[Lemma 3.1]{MP2}.

{\it Step 3} (Key inequality). Here, we prove that
\begin{equation}\label{eq:key inequality rho and derivate seconde}
	\rho_e -\rho_i \le C \nr \de_{\pa \Om}^{1/2} \na^2 h \nr_{L^2( \Om )}^{\tau_N}, 
\end{equation}
where $\tau_N$ is as in the statement of Theorem \ref{thm:Improved-Serrin-stability}, and $C$ only depends on $N$, $d_\Om$, the $C^{1,\al}$ modulus of $\pa \Om$, and $\theta$ (only in the case $N=3$).

The reference result here is \cite[Theorem 2.8]{MP3}. 
To extend \cite[Theorem 2.8]{MP3} in the present setting, we need an appropriate extension of \cite[Lemma 2.7]{MP3}, which is provided
%
%
in \cite{MP4Interpolating}.
Here, it is enough to apply \cite[Theorem 3.1]{MP4Interpolating} with $L=\De$, $v=h$, $\al=1$ to get that
\begin{equation}\label{eq:lemma con interpolating e norma grad}
\osc_{\pa \Om} h \le C \nr \na h \nr_{L^\infty(\Om)}^{N/(N+p)} \nr h - h_\Om \nr_{L^p(\Om)}^{p/(N+p)} ,
\end{equation}
where $h_\Om$ denotes the mean value of $h$ on $\Om$ and $C$ only depends on $N$, $p$, $d_\Om$, and the $C^{1,\al}$ modulus\footnote{\cite[Theorem 3.1]{MP4Interpolating} has been proved for domains satisfying a uniform interior cone condition. This class of domains contains that of Lipschitz domains, which in turn contains $C^{1,\al}$ domains. Of course, the parameters of the uniform interior cone condition appearing in the estimate in \cite[Theorem 3.1]{MP4Interpolating} can be bounded in terms of the $C^{1,\al}$ modulus of $\pa \Om$.} of $\pa \Om$. 
Notice that, $\nr \na h \nr_{L^\infty(\Om)}$ can be estimated in terms of $N$, $d_\Om$, and the $C^{1, \al}$ modulus of $\pa \Om$, by putting together
$$
\nr \na h \nr_{L^\infty(\Om)} = \max_{\ol{\Om}} |\na h| \le \max_{\ol{\Om}} |\na v| + \frac{d_\Om}{N}
$$
and the classical Schauder estimate for $\max_{\ol{\Om}} |\na v|$.
%
%
Thus, \eqref{eq:oscillation and rho} and \eqref{eq:lemma con interpolating e norma grad} ensure that
\begin{equation}\label{eq: stima rho and Lp}
\rho_e -\rho_i \le C  \nr h - h_\Om \nr_{L^p(\Om)}^{p/(N+p)} 
\end{equation}
holds true with a constant $C$ only depending on $N$, $p$, $d_\Om$, and the $C^{1,\al}$ modulus of $\pa \Om$.

With this at hand, one can directly check that replacing \cite[Equation (1.13)]{MP3} and \cite[Lemma 2.7]{MP3} with \eqref{eq:oscillation and rho} and \eqref{eq: stima rho and Lp} in the proof of \cite[Theorem 2.8]{MP3} leads to\footnote{ 
To this end, one must check that the constants appearing in the weighted Poincar\'e-type inequalities \cite[Equation (2.8) and item (i) of Corollary 2.3]{MP3} 
%
%
(applied to $h$) and
%
%
in the Morrey-Sobolev-type inequality \cite[Equation (2.20)]{MP3} can indeed be bounded in terms of the the above mentioned parameters.

As stated in \cite[Lemma 2.1 and Corollary 2.3]{MP3}, the Poincar\'e-type inequalities in \cite[Equation (2.8) and item (i) of Corollary 2.3]{MP3} hold true in the huge class of John domains (see \cite{PogTesi} and references therein for more details), which in particular contains $C^{1,\al}$ domains. 
Moreover, \cite[items (i) and (ii) of Remark 2.4]{MP3} give explicit estimates for the constants in \cite[Equation (2.8) and item (i) of Corollary 2.3]{MP3} in terms of $d_\Om$, $\de_{\pa \Om}(z)$, and the so-called John parameter of $\Om$.
Now, the John parameter can be bounded in terms of the $C^{1,\al}$ modulus of $\pa \Om$ and $d_\Om$.
Also, the dependency on $\de_{\pa \Om}(z)$ can be dropped thanks to the inequality
\cite[Equation (2.21)]{MP3} with $M=\max_{ \ol{\Om}} |\na v|$ and the radius $r_i$ replaced by the inradius $r_\Om$, i.e., the radius of any largest ball contained in $\Om$ (see also \cite{MP5Hotspots}).  
In turn, both $r_\Om$ and $\max_{ \ol{\Om}} |\na v|$ can be estimated by the $C^{1,\al}$ modulus of $\pa \Om$.

Finally, the constant in the Morrey-Sobolev-type inequality \cite[Equation (2.20)]{MP3} only depends on the parameters of a uniform interior cone condition (see \cite[Remark 2.9]{MP3} and
\cite[Theorem~9.1]{Fr1983}), which can be easily bounded in terms of the $C^{1,\al}$ modulus of $\pa \Om$.

These observations complete the proof of \eqref{eq:key inequality rho and derivate seconde}.
An alternative approach toward \eqref{eq:key inequality rho and derivate seconde}, which also applies in the present setting, can be found in \cite{MP6NewInterpolating}.
}
\eqref{eq:key inequality rho and derivate seconde}.

{\it Step 4} (Final estimate for the left-hand side of \eqref{eq:FI Classical Serrin}). Putting together \eqref{eq:key inequality rho and derivate seconde} and \eqref{eq:optimal growth} immediately gives that
\begin{equation}\label{eq:LHSIDE FIdentity Estimate}
	\rho_e -\rho_i \le C \left( \int_{\Om} v \, |\na^2 h|^2  \, dx \right)^{\tau_N /2} ,
\end{equation}
where $\tau_N$ is as in the statement of Theorem \ref{thm:Improved-Serrin-stability}, and $C$ only depends on $N$, $d_\Om$, the $C^{1,\al}$ modulus of $\pa \Om$, and $\theta$ (only in the case $N=3$).

{\it Step 5} (Estimate for the right-hand side of \eqref{eq:FI Classical Serrin}).
We start by estimating from above the right-hand side of \eqref{eq:FI Classical Serrin} by using H\"older's inequality as follows:
\begin{equation}\label{eq:NEW CLASSICAL SERRIN step 1}
	\int_{\pa \Om} \left(c^2 - (\pa_n v)^2 \right) \pa_n h \, dS_x \le \left( c +  \max_{ \ol{ \Om }} | \na v | \right) \nr \pa_n v - c \nr_{L^2 (\pa \Om)} \nr \pa_n h \nr_{L^2 (\pa \Om)}.
\end{equation}
Notice that $c + \max_{\ol{\Om}} |\na v|$ can be bounded above by a constant depending on $N$, $d_\Om$, and the $C^{1,\al}$ modulus of $\pa \Om$; this easily follows in light of the classical Schauder estimates for $\max_{\ol{\Om}} |\na v|$, and estimating $|c|$ by putting together \eqref{eq:value of c},
the isoperimetric inequality
$$
| \pa \Om | \ge N | B_1 |^{1/N} | \Om |^{(N-1)/N} ,
$$
and the trivial bound
$$ 
|\Om| \le |B_1| (d_\Om/2)^N , 
$$
where $B_1$ denotes a unit ball in $\RR^N$.

Now, reasoning as in \cite[Lemma 2.5]{MP3}\footnote{We can repeat the proof of \cite[(i) of Lemma 2.5]{MP3} (with $u=v$ and $v=h$) just by replacing \cite[(1.15)]{MP3} with \eqref{eq:optimal growth} and \cite[Theorem 3.10]{MP} with 
	\begin{equation*}
		- \pa_n v \ge k, \quad \text{ where } k \text{ is the constant appearing in \eqref{eq:Hopf-Olenik for C1alpha}},
	\end{equation*}
which easily follows from \eqref{eq:Hopf-Olenik for C1alpha}. Also, we took into account that a different normalization of the problem \eqref{serrin1} was adopted in \cite{MP3}.
%
%
}, we can prove that
\begin{equation}\label{eq:trace inequality pan h}
	\nr \pa_n h \nr_{L^2 (\pa \Om)}^2 \le C \int_{\Om} v \, |\na^2 h|^2  \, dx ,
\end{equation}
where $C$ is a constant only depending on $N$, $d_{\Om}$, and the $C^{1, \al}$ modulus of $\pa \Om$.

As in the proof of \cite[Theorem 3.1]{MP3}, putting together \eqref{eq:FI Classical Serrin}, \eqref{eq:NEW CLASSICAL SERRIN step 1} and \eqref{eq:trace inequality pan h} gives that
\begin{equation}\label{eq:RHside stima pa n h}
	\nr \pa_n h \nr_{L^2 (\pa \Om)} \le C \,	\nr \pa_n v - c \nr_{L^2 (\pa \Om)} ,
\end{equation}
now with a constant $C$ only depending on $N$, $d_{\Om}$, and the $C^{1, \al}$ modulus of $\pa \Om$. Thus, combining \eqref{eq:FI Classical Serrin}, \eqref{eq:NEW CLASSICAL SERRIN step 1} and \eqref{eq:RHside stima pa n h} gives \begin{equation}\label{last inequality}
\int_{\Om} v \, |\na^2 h|^2  \, dx \le C \, \nr \pa_n v - c \nr_{L^2 (\pa \Om)}^2 ,    
\end{equation}
where $C$ is a constant only depending on $N$, $d_{\Om}$, and the $C^{1, \al}$ modulus of $\pa \Om$.
The conclusion of Theorem \ref{thm:Improved-Serrin-stability} immediately follows by combining \eqref{last inequality} and \eqref{eq:LHSIDE FIdentity Estimate}.
\end{proof}

We remark that, in the proofs of Theorems \ref{thm I} and \ref{thm II}, we are able to obtain an upper bound on the uniform norm of the deviation of $\pa_n v$ from $c$. We are therefore interested in an estimate similar to \eqref{general improved stability serrin C} but where the $L^2$ norm 
is replaced by the uniform norm. In other words, what we really need in the proofs of Theorems \ref{thm I} and \ref{thm II} is the following Corollary of Theorem \ref{thm:Improved-Serrin-stability}. Nevertheless, since Theorem \ref{thm:Improved-Serrin-stability} is of independent interest, we decided to state it in its full generality in the introduction of this paper.

\begin{corollary}[Stability for the one-phase problem with uniform deviation in terms of the $C^{1,\al}$ modulus of $\pa \Om$]
\label{cor:Improved-Serrin-stability-deviation in uniformnorm}
Let $\Om\subset\RR^N$ be a bounded domain with boundary $\pa \Om$ of class $C^{1, \al}$ and $c$ be the constant defined in \eqref{eq:value of c}.
Let $v$ be the solution of \eqref{serrin1}
%
%
and 
let $z\in\Om$ be a point such that $v(z)=\displaystyle\max_{\ol\Om} v$. Then there exists a positive constant $C_6$ such that
\begin{equation}
\label{general improved stability serrin C with uniform deviation}
\rho_e-\rho_i\le C_6\,\nr \pa_n v - c \nr_{L^\infty (\pa \Om) }^{\tau_N} ,
\end{equation}
where $\tau_N$ is defined as in Theorem \ref{thm:Improved-Serrin-stability} and the constant $C_6 > 0$ only depends on $N$, $d_\Om$, the $C^{1,\al}$ modulus of the boundary $\pa\Om$, and $\theta$ (only in the case $N=3$).
%
%
\end{corollary}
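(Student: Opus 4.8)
The plan is to deduce Corollary \ref{cor:Improved-Serrin-stability-deviation in uniformnorm} directly from Theorem \ref{thm:Improved-Serrin-stability}, exploiting the elementary fact that on a set of finite measure the $L^2$ norm is controlled by the $L^\infty$ norm. Since the hypotheses of the corollary coincide with those of Theorem \ref{thm:Improved-Serrin-stability} (indeed \eqref{serrin1} is precisely \eqref{eq:2phase problem} with $\sg_c=1$), the only real work is to convert the $L^2$ deviation appearing in \eqref{general improved stability serrin C} into an $L^\infty$ deviation, and to check that the resulting constant retains the admissible dependence.

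First I would record the elementary trace bound. Because $\pa \Om$ is of class $C^{1,\al}$, the function $\pa_n v - c$ is continuous on $\pa \Om$, so that
\[
\nr \pa_n v - c \nr_{L^2(\pa \Om)}^2 = \int_{\pa \Om} |\pa_n v - c|^2 \, dS_x \le |\pa \Om| \, \nr \pa_n v - c \nr_{L^\infty(\pa \Om)}^2 .
\]
Taking square roots and raising to the power $\tau_N$ gives $\nr \pa_n v - c \nr_{L^2(\pa \Om)}^{\tau_N} \le |\pa \Om|^{\tau_N/2} \, \nr \pa_n v - c \nr_{L^\infty(\pa \Om)}^{\tau_N}$. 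Inserting this into \eqref{general improved stability serrin C} yields \eqref{general improved stability serrin C with uniform deviation} with the explicit choice $C_6 = C_1 \, |\pa \Om|^{\tau_N/2}$, where $C_1$ is the constant furnished by Theorem \ref{thm:Improved-Serrin-stability}.

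It then remains to verify that $C_6$ depends only on the admissible parameters, namely $N$, $d_\Om$, the $C^{1,\al}$ modulus of $\pa \Om$, and $\theta$ (only when $N=3$). Since $C_1$ already has exactly this dependence, the point is to bound the perimeter $|\pa \Om|$ in terms of $N$, $d_\Om$, and the $C^{1,\al}$ modulus alone. I expect this to be the only genuine (if mild) obstacle, and it is purely geometric: using the $C^{1,\al}$ modulus $(K,\rho_0)$ one covers $\pa \Om$ by finitely many pieces of the form $\pa \Om \cap B_\rho(x_0)$ with $\rho \le \rho_0$, each flattened by a map $\Psi$ with $\nr \Psi \nr_{C^{1,\al}}, \nr \Psi^{-1} \nr_{C^{1,\al}} \le K$; the $(N-1)$-dimensional surface measure of each such piece is then bounded by a constant depending only on $N$, $K$, and $\rho_0$, while the number of pieces needed to cover $\pa \Om \subset B_{d_\Om}(z)$ is controlled by $N$, $d_\Om$, and $\rho_0$ through a standard Vitali-type covering argument. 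Combining these two facts bounds $|\pa \Om|$ as required and completes the proof; alternatively, one could simply invoke any standard estimate controlling the perimeter of a $C^{1,\al}$ domain in terms of its diameter and regularity modulus.
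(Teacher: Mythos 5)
Your proposal is correct and follows the same main route as the paper: both reduce \eqref{general improved stability serrin C with uniform deviation} to \eqref{general improved stability serrin C} via the trivial bound $\nr \pa_n v - c\nr_{L^2(\pa\Om)}\le |\pa\Om|^{1/2}\nr \pa_n v - c\nr_{L^\infty(\pa\Om)}$, so the only issue is controlling $|\pa\Om|$ by the admissible parameters. The one place you diverge is in that last step: you bound the perimeter by a purely geometric covering argument using the $C^{1,\al}$ charts, whereas the paper gets it for free from the PDE, combining the identity $|\Om|=\int_{\Om}(-\De v)\,dx=\int_{\pa\Om}(-\pa_n v)\,dS_x$ with the Hopf--Oleinik lower bound $-\pa_n v\ge k$ already established in Step 2 of the proof of Theorem \ref{thm:Improved-Serrin-stability}, which yields $|\pa\Om|\le |\Om|/k\le |B_1|(d_\Om/2)^N/k$ with $k$ depending only on $N$ and the $C^{1,\al}$ modulus. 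Your covering argument is more generic (it would work for any $C^{1,\al}$ domain independently of the boundary value problem) but requires writing out the chart-counting details; the paper's argument is shorter and recycles constants already in hand. Either way the dependence of $C_6$ is as claimed.
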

\begin{proof}
%
%
%
Since
\begin{equation}\label{eq:stima con perimetro da L2Serrin a Linfinito}
	\nr \pa_n v - c \nr_{L^2 (\pa \Om) } \le | \pa \Om |^{1/2} \, \nr \pa_n v - c \nr_{L^\infty (\pa \Om) }
\end{equation}
trivially holds true,
the desired result
can be easily deduced by \eqref{general improved stability serrin C}. It only remains to notice that we can get rid of the dependence on $| \pa \Om|$ appearing in \eqref{eq:stima con perimetro da L2Serrin a Linfinito}, thanks to the bound
$$
| \pa \Om | \le \frac{| \Om |}{k}, \quad \text{ where } k \text{ is the constant appearing in \eqref{eq:Hopf-Olenik for C1alpha}.}
$$
The last bound follows by putting together the identity
$$
| \Om | = \int_{\Om} (- \De v ) \, dx = \int_{\pa \Om} (- \pa_n v ) \, dS_x
$$
%
%
and the inequality $- \pa_n v \ge k$, which easily follows from \eqref{eq:Hopf-Olenik for C1alpha}.
\end{proof}

\section{Proof of Theorem \ref{thm I}}\label{sec:thm I}
In this section, we prove Theorem \ref{thm I}. First, we will show the Fr\'echet differentiability of the solution of \eqref{eq:2phase problem} with respect to the parameter $\sigma_c$. 
\begin{lemma}\label{frechet differentiability}
Let $\Om\subset\rn$ be a bounded domain of class $C^{1,\al}$ and $D$ be an open set 
such that $\ol D \subset \Om$. 
Moreover, let $U\subset \ol U\subset \Om$ be an open neighborhood of $\ol D$ of class $C^{1,\al}$.
For $t\in (-1,\infty)$, let $u(t)\in H_0^1(\Om)$ denote the solution of \eqref{eq:2phase problem} with respect to $\sg(t)=1+t\chi_D$ (that is, $\sg_c=1+t$). Then, $u(\cdot)$ defines a Fr\'echet differentiable map
\begin{equation*}
t\mapsto u(t)\in H_0^1(\Om)\cap 
C^{1,\al}(\ol \Om\setminus U).
\end{equation*}
Moreover, for every $t_0\in(-1,\infty)$, the Fr\'echet derivative $u'(t_0)$ is given by the solution of the following boundary value problem.
\begin{equation}\label{u'(t_0)}
    \begin{cases}
    -\dv \left( \sg(t_0)\gr u'(t_0) \right) =-\dv \left(\chi_D \gr u(t_0) \right) \quad \text{in }\Om,\\
    u'(t_0)=0 \quad \text{on }\pa\Om.
    \end{cases}
\end{equation}
\end{lemma}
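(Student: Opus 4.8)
The plan is to set up the problem as an application of the implicit function theorem on Banach spaces. First I would fix the parameter $t_0 \in (-1, \infty)$ and define a map $F : (-1, \infty) \times X \to Y$, where $X$ is a suitable Banach space of admissible solutions (essentially $H_0^1(\Om)$ intersected with the Hölder space $C^{1,\al}(\ol\Om \setminus U)$, the latter capturing the regularity away from the inclusion) and $Y$ is the natural target space for the weak formulation of the equation. Concretely, for $(t, w) \in (-1,\infty) \times X$ I would set $F(t, w)$ to encode the weak equation $-\dv(\sg(t)\gr w) = 1$ in $\Om$ together with the boundary condition $w = 0$ on $\pa\Om$; the zero set of $F$ is precisely the graph $t \mapsto u(t)$. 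The key observation is that $\sg(t) = 1 + t\chi_D$ depends \emph{affinely} (hence smoothly) on $t$, so $F$ is affine in $t$ and linear in $w$, and therefore $F$ is jointly (real-)analytic, in particular $C^1$.

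Next I would verify the two hypotheses of the implicit function theorem at $(t_0, u(t_0))$. The partial derivative $\pa_w F(t_0, u(t_0))$ is the linear operator $w \mapsto -\dv(\sg(t_0)\gr w)$ with homogeneous Dirichlet data; since $\sg(t_0) = 1 + t_0\chi_D$ is bounded and uniformly elliptic (here $t_0 > -1$ guarantees $\sg(t_0) \ge \min\{1, 1+t_0\} > 0$), the Lax--Milgram theorem gives that this operator is an isomorphism onto the appropriate dual space at the $H_0^1$ level. The implicit function theorem then yields that $u(\cdot)$ is Fréchet differentiable as a map into $H_0^1(\Om)$, with derivative obtained by implicit differentiation: differentiating $F(t, u(t)) = 0$ in $t$ at $t_0$ gives $\pa_t F + \pa_w F \cdot u'(t_0) = 0$, which unwinds to exactly the boundary value problem \eqref{u'(t_0)}, namely $-\dv(\sg(t_0)\gr u'(t_0)) = -\dv(\chi_D \gr u(t_0))$ with zero boundary data.

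The main obstacle—and the point requiring genuine care rather than routine verification—is upgrading the differentiability from the $H_0^1$ topology to the stronger $C^{1,\al}(\ol\Om \setminus U)$ topology on the region away from the inclusion. Here I would exploit that on $\Om \setminus \ol D$ the coefficient $\sg(t)$ is \emph{constant} (equal to $1$), so both $u(t)$ and the difference quotients solve $-\De(\cdot) = 1$ (respectively a homogeneous equation) on this subregion; combined with the separation $\ol D \subset U \subset \ol U \subset \Om$, interior and boundary Schauder estimates on $\ol\Om \setminus U$ control the $C^{1,\al}$ norm by the $H_0^1$ (or $L^2$) norm of the solution on the slightly larger region $\Om \setminus \ol D$, on which the data are smooth. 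Thus the $H_0^1$ convergence of difference quotients, together with these elliptic estimates applied to the harmonic remainder, promotes the convergence to the $C^{1,\al}(\ol\Om \setminus U)$ topology and identifies the limit with $u'(t_0)$ there. The transmission across $\pa D$ and the low $C^{1,\al}$ regularity of $\pa\Om$ are precisely why the auxiliary neighborhood $U$ is introduced: it insulates the Schauder machinery from the interface $\pa D$, where only $H^1$-type regularity is guaranteed.
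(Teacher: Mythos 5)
Your proposal is correct and follows the same overall strategy as the paper --- the implicit function theorem on Banach spaces, joint differentiability of the residual map via its affine dependence on $t$ and linearity in $u$, invertibility of the linearized operator $\varphi\mapsto -\dv(\sg(t_0)\gr\varphi)$, and implicit differentiation of the weak form to obtain \eqref{u'(t_0)}. The one genuine difference is where the $C^{1,\al}(\ol\Om\setminus U)$ regularity enters. The paper builds it into the function space from the outset: it composes the residual with $(-\De)^{-1}$ so that the map $V(t,u)$ goes from $(-1,\infty)\times X$ into $X:=H_0^1(\Om)\cap C^{1,\al}(\ol\Om\setminus U)$, checks via the boundary Schauder estimates \cite[Theorem 8.33]{GT} and the $L^\infty$ estimate \cite[Theorem 8.16]{GT} that both $V$ and the inverse of $\pa_u V(t_0,u(t_0))$ act on $X$, and then gets Fr\'echet differentiability into $X$ in a single application of the implicit function theorem. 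You instead run the implicit function theorem purely at the $H_0^1$/$H^{-1}$ level via Lax--Milgram and then upgrade a posteriori, using that the difference quotients are harmonic on $\Om\setminus\ol D$ (where $\sg\equiv1$) together with interior estimates and boundary Schauder estimates to promote $H^1$ convergence to $C^{1,\al}(\ol\Om\setminus U)$ convergence. Both routes are valid: the paper's choice of $X$ avoids a separate limiting argument but requires verifying the mapping and invertibility properties in the stronger norm, while your two-step version keeps the implicit function theorem application elementary at the cost of an extra (standard, but not entirely trivial) bootstrap --- e.g.\ passing from the $H^1$ norm of a harmonic remainder on $\Om\setminus\ol D$ to its sup norm on an intermediate set via interior estimates and the maximum principle before invoking the boundary Schauder estimate. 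Since that is exactly the role the auxiliary neighborhood $U$ plays in both arguments, your identification of it as the crux is on target.
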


The proof of Lemma \ref{frechet differentiability} relies on a standard method (see \cite[proof of Theorem 5.3.2, pp.206--207]{HP2005} for an application to shape-differentiability) based on the following implicit function theorem for Banach spaces (see \cite[Theorem 2.3, p.38]{AP1983} for a proof). 

\begin{theorem}
[Implicit function theorem]\label{ift}
Let $\Psi\in C^k(\La\times W,Y)$, $k\ge1$, where $Y$ is a Banach space and $\La$ (resp. $U$) is an open set of a Banach space $T$ (resp. $X$). Suppose that  $\Psi(\la^*,w^*)=0$ and that the partial derivative $\pa_w\Psi(\la^*,w^*)$ is a bounded invertible linear transformation from $X$ to $Y$. 

Then there exist neighborhoods $\Theta$ of $\la^*$ in $T$ and $W^*$ of $w^*$ in $X$, and a map $g\in C^k(\Theta,X)$ such that the following hold:
\begin{enumerate}[label=(\roman*)]
\item $\Psi(\la,g(\la))=0$ for all $\la\in\Theta$,
\item If $\Psi(\la,u)=0$ for some $(\la,u)\in\Theta\times U^*$, then $u=g(\la)$,
\item $g'(\la)=-\left(\pa_u \Psi(p) \right)^{-1}\circ \pa_\la \Psi(p)$, where $p=(\la,g(\la))$ and $\la\in\Theta$.
\end{enumerate}
\end{theorem}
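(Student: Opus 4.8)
The plan is to reduce the equation $\Psi(\lambda, w) = 0$ to a fixed point problem and invoke the contraction mapping principle, using the linearization $A := \partial_w \Psi(\lambda^*, w^*)$ as a preconditioner. Since $A \in \mathcal{L}(X, Y)$ is a bounded bijection of Banach spaces, the bounded inverse theorem gives $A^{-1} \in \mathcal{L}(Y, X)$. For $\lambda$ near $\lambda^*$ I would define $F_\lambda : X \to X$ by
\[
F_\lambda(w) = w - A^{-1} \Psi(\lambda, w),
\]
and note that, $A^{-1}$ being injective, $w$ is a fixed point of $F_\lambda$ if and only if $\Psi(\lambda, w) = 0$. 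This turns solvability into the location of fixed points.

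First I would show that $F_\lambda$ is a uniform contraction on a small closed ball $\overline{B}(w^*, r) \subset X$. Differentiating gives $\partial_w F_\lambda(w) = I - A^{-1} \partial_w \Psi(\lambda, w)$, which vanishes at $(\lambda^*, w^*)$; since $\Psi \in C^1$, the continuity of $\partial_w \Psi$ lets me pick $r > 0$ and a neighborhood $\Theta$ of $\lambda^*$ with $\| \partial_w F_\lambda(w) \| \le 1/2$ throughout, so $F_\lambda$ is $\tfrac12$-Lipschitz by the mean value inequality. Shrinking $\Theta$, the continuity of $\Psi$ together with $\Psi(\lambda^*, w^*) = 0$ forces $\| F_\lambda(w^*) - w^* \| = \| A^{-1} \Psi(\lambda, w^*) \| \le r/2$, whence $F_\lambda$ maps $\overline{B}(w^*, r)$ into itself. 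The Banach fixed point theorem then yields, for each $\lambda \in \Theta$, a unique fixed point $g(\lambda) \in \overline{B}(w^*, r)$, establishing existence (i) and local uniqueness (ii) with $W^* = B(w^*, r)$; comparing $F_\lambda$ and $F_{\lambda'}$ gives that $g$ is Lipschitz continuous.

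Next I would upgrade $g$ to a differentiable map and derive (iii). Because the invertible elements form an open subset of $\mathcal{L}(X, Y)$ on which inversion is continuous (via the Neumann series), after shrinking $\Theta$ I may assume $\partial_w \Psi(\lambda, g(\lambda))$ stays invertible. Writing $k = g(\lambda + h) - g(\lambda)$ and $p = (\lambda, g(\lambda))$, and using $\Psi(\lambda + h, g(\lambda + h)) = \Psi(\lambda, g(\lambda)) = 0$ with the Fréchet differentiability of $\Psi$,
\[
0 = \partial_\lambda \Psi(p)\, h + \partial_w \Psi(p)\, k + o(\|h\| + \|k\|).
\]
The Lipschitz bound on $g$ gives $\|k\| = O(\|h\|)$, so the remainder is $o(\|h\|)$; applying $\partial_w \Psi(p)^{-1}$ and solving for $k$ produces $k = -\partial_w \Psi(p)^{-1} \partial_\lambda \Psi(p)\, h + o(\|h\|)$, which is precisely the assertion that $g$ is differentiable with the derivative stated in (iii).

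Finally, the $C^k$ regularity follows by bootstrapping formula (iii): its right-hand side is the composition of $\lambda \mapsto (\lambda, g(\lambda))$, the $C^{k-1}$ maps $\partial_\lambda \Psi$ and $\partial_w \Psi$, and the $C^\infty$ operator-inversion map, so if $g$ is $C^j$ with $j < k$ then $g'$ is $C^{\min(j,\,k-1)}$, and a routine induction gives $g \in C^k(\Theta, X)$. The step I expect to be the \emph{main obstacle} is the differentiability argument: genuinely controlling the remainder as $o(\|h\|)$ rests on the a priori estimate $\|k\| = O(\|h\|)$ inherited from the contraction, and on the openness and smoothness of operator inversion needed to keep $\partial_w \Psi$ invertible along the solution branch. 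The existence, uniqueness, and continuity of $g$ are, by contrast, routine fixed-point analysis.
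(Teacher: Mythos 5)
Your proposal is correct: the contraction-mapping argument with the preconditioner $A^{-1}$, the self-map and $\tfrac12$-Lipschitz estimates on the closed ball, the a priori bound $\|k\|=O(\|h\|)$ used to control the remainder in the differentiability step, and the bootstrap of formula (iii) for $C^k$ regularity are all sound. Note that the paper itself gives no proof of this statement --- it quotes it as a known result, citing \cite[Theorem 2.3, p.38]{AP1983} --- and the proof in that reference is essentially the same standard fixed-point argument you gave, so your approach matches the intended one.
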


\begin{proof}[Proof of Lemma \ref{frechet differentiability}]
For arbitrary $t\in(-1,\infty)$ and $u\in H_0^1(\Om)$, let $V(t,u)$ denote the solution to the following boundary value problem:
\begin{equation}\label{V(t,u)}
    \begin{cases}
    -\De V=-\dv\left(\sg(t)\gr u \right)-1 \quad \text{in }\Om,\\
    V=0\quad \text{on }\pa\Om.
    \end{cases}
\end{equation}
A functional analytical interpretation of this mapping is the following: we are identifying $V\in H_0^1(\Om)$ with the element $-\dv\left(\sg(t)\gr u \right)-1\in H^{-1}(\Om)$ whose action on $H_0^1(\Om)$ is defined via integration by parts, that is, for $\varphi\in H_0^1(\Om)$, 
\begin{equation*}
        \left(V,\varphi\right)_{H_0^1}= \int_\Om\gr V\cdot \gr\varphi=\int_{\Om}\sg(t)\gr u \cdot \gr \varphi-\int_\Om\varphi=\langle -\dv\left(\sg(t)\gr u \right)-1,\varphi\rangle.
\end{equation*}
By 
the classical Schauder estimates for the Dirichlet problem near the boundary (see, for instance \cite[Theorem 8.33]{GT} and the subsequent remarks) and the $L^\infty$ estimates \cite[Theorem 8.16]{GT}, we notice that $V(\cdot,\cdot)$ defines a mapping $(-1,\infty)\times X\to X$, where $X$ is the Banach space 
 \begin{equation*}
X:= H_0^1(\Om)\cap 
C^{1,\al}(\ol \Om\setminus U).
 \end{equation*}
By the defining properties of $V(t,u)$, it is clear that $u$ solves \eqref{eq:2phase problem} with $\sg=\sg(t)$ if and only if $V(t,u)\equiv 0$. In particular, for all $t_0\in (-1,\infty)$, the pair $(t_0,u(t_0))$ is a zero of $V$ by definition. 

We will now show that the map $V$ is (totally) Fr\'echet differentiable jointly in the variables $t$ and $u$. By the definition of $\sg(t)$ we can expand the left-hand side of \eqref{V(t,u)} as $-\De u- t\dv\left(\chi_D \gr u \right)-1$. By the linearity of problem \eqref{V(t,u)}, this implies that the map $V(t,u)$ can be decomposed as the sum of three parts:
\begin{equation*}
    V(t,u)=V_1(u)+V_2(t,u)+V_3,
\end{equation*}
where $V_i$ ($i=1,2,3$) are the solution of $-\De V= f_i$ with with Dirichlet zero boundary condition corresponding to 
\begin{equation*}
f_1= -\De u, \quad f_2= -t\dv\left(\chi_D \gr u \right), \quad f_3 =-1.    
\end{equation*}
Now, notice that, by construction, $V_1(u)$ is linear and continuous in $u$, $V_2(t,u)$ is bilinear and continuous in $(t,u)$ and $V_3$ does not depend on either $t$ or $u$. In particular, we get that $V_1$, $V_2$ and $V_3$ are all Fr\'echet differentiable. As a consequence, we get the Fr\'echet differentiability of the map $(t,u)\mapsto V(t,u)$ in the appropriate Banach spaces. Now, a simple computation yields that, for fixed $t_0\in(-1,\infty)$, the partial Fr\'echet differential $\pa_u V(t_0,u(t_0))$ is given by the mapping from the Banach space $X$ into itself defined as:
\begin{equation*}
 X\ni \varphi\mapsto    \pa_u V(t_0,u(t_0)) [\varphi]= W(t_0,\varphi), 
\end{equation*}
where $W(t_0,\varphi)\in X$ is the unique solution to the following boundary value problem:
\begin{equation}\label{W eq}
    \begin{cases}
    -\De W = -\dv\left( \sg(t_0) \gr \varphi\right) \quad \text{in }\Om,\\
    W=0\quad \text{on }\pa\Om.
    \end{cases}
\end{equation}
By ``inverting the roles" of the right and left-hand side in the above and applying once again 
the classical Schauder estimates for the Dirichlet problem near the boundary and the $L^\infty$ estimates as before, we can conclude that the map $\varphi\mapsto\pa_u V(t_0,u(t_0))[\varphi]$ is invertible (that is, problem \eqref{W eq} is well posed in the appropriate Banach spaces), as required. We can, therefore, apply the implicit function theorem to the map $(t,u)\mapsto V(t,u)$ at its zero $(t_0,u(t_0))$. This yields the existence of a Fr\'echet differentiable branch
\begin{equation*}
    (t_0-\ve,t_0+\ve)\ni t\mapsto \widetilde u(t)\in X\quad \text{such that }V(t,\widetilde u(t))=0.
\end{equation*}
In other words, $\widetilde u(t)$ also solves \eqref{eq:2phase problem}. Now, by the unique solvability of \eqref{eq:2phase problem}, $\widetilde u(t)=u(t)$, and therefore, the map $t\mapsto u(t)\in X$ is Fr\'echet differentiable, as claimed.
Finally, \eqref{u'(t_0)} is derived by simple differentiation with respect to $t$ of the weak form 
\begin{equation*}
\int_\Om \sg(t) \gr u(t)\cdot \gr \varphi =\int_\Om \varphi \quad \text{for all }\varphi\in H_0^1(\Om).
\end{equation*}
The proof is completed.
\end{proof}
\begin{proof}[Proof of Theorem \ref{thm I}]
As above, let $u(t)$ denote the solution to \eqref{eq:2phase problem} with $\sg=\sg(t)$. Moreover, suppose that, for some small 
$t_0\in(-1,1)$
, the function $u(t_0)$ satisfies the overdetermined condition
\begin{equation*}
\pa_n u(t_0)= c\quad \text{on }\pa\Om.
\end{equation*}
Consider the map 
\begin{equation}\label{t to pa_n u}
    (-1,\infty)\ni t\mapsto \restr{\pa_n u(t)}{\pa\Om}\in C^\al (\pa\Om). 
\end{equation}
Lemma \ref{frechet differentiability} tells us that the map defined by \eqref{t to pa_n u} is Fr\'echet differentiable. 
In particular, for all $x\in \pa\Om$, the map $t\mapsto\pa_n u(t)(x)\in\RR$ is differentiable.
By the fundamental theorem of calculus we have
\begin{equation*}
    \pa_n u(t_0)(x)-\pa_n u(0)(x)=\int_{0}^{t_0} \pa_n u'(\tau)(x)\ d\tau.
\end{equation*}
Therefore,
\begin{equation}\label{key estimate}
\norm{\pa_n u(t_0)-\pa_n u(0)}_{C^{\al}(\pa\Om)}\le |t_0| \max_I
\ \norm{\pa_n u'(\tau)}_{C^{\al}(\pa\Om)},
\end{equation}
where $I=\left[\min(0,t_0), \max(0,t_0)\right]$.
Again, by two applications of the classical Schauder estimates for the Dirichlet problem near the boundary \cite[Theorem 8.33]{GT} and the $L^\infty$ estimate \cite[Theorem 8.16]{GT}, we can estimate the right-hand side in the inequality above to get
\begin{equation}\label{v close to neumann}
    \norm{\pa_n v-c}_{L^\infty(\pa\Om)}
    \le\norm{\pa_n v-c}_{C^{\al}(\pa\Om)}
    =\norm{\pa_n u(t_0)-\pa_n u(0)}_{C^{\al}(\pa\Om)}
    \le C_7 |t_0|,
\end{equation}
where the constant $C_7>0$ depends only on $|\Om|$, $N$ 
and the $C^{1,\al}$ modulus of the boundary $\pa\Om$. 
By applying Corollary \ref{cor:Improved-Serrin-stability-deviation in uniformnorm}, we get the following estimate: 
\begin{equation*}
\rho_e-\rho_i\le C_2 |\sg_c-1|^{\tau_N},   
\end{equation*}
where $\tau_N$ is defined as in Theorem \ref{thm:Improved-Serrin-stability} and the constant $C_2$ depends on $N$, $d_\Om$, the $C^{1,\al}$ modulus of 
%
%
$\pa\Om$, and $\theta$ (only in the case $N=3$).
This is the desired estimate.
\end{proof}

\begin{remark}\label{extension of thm I}
The result of Theorem \ref{thm I} can be immediately extended to the case where 
$\pa \Om$ is of class $C^{1,\al}$ and
the overdetermined condition in \eqref{odp} reads \begin{equation}\label{extended odc}
\pa_n u(x)=c+\eta(x) \quad \text{for }x\in \pa\Om, 
\end{equation}
where the function $\eta\in L^\infty(\pa\Om)$ has vanishing mean over $\pa\Om$. Instead of \eqref{v close to neumann} we get 
\begin{equation}\label{pn v close to eta}
\norm{\pa_n v-c}_{L^\infty(\pa\Om)}\le 
\norm{\pa_n v-\pa_n u}_{L^\infty(\pa\Om)} + \norm{\pa_n u -c}_{L^\infty(\pa\Om)}\le
C_7|\sg_c-1|+ \norm{\eta}_{L^\infty(\pa\Om)}.
\end{equation}
Now, by applying Corollary \ref{cor:Improved-Serrin-stability-deviation in uniformnorm} we get the following estimates: 
\begin{equation}\label{Thm I remark}
\rho_e-\rho_i\le C_6 \left( C_7 |\sg_c-1|+\norm{\eta}_{L^\infty(\pa\Om)}  \right)^{\tau_N}.    
\end{equation}
\end{remark}

\section{Proof of Theorem \ref{thm II}}\label{sec:thm II}
In this section, we prove Theorem \ref{thm II}. Let $\Omega \subset \rn$ be a bounded domain of class $C^{1,\al}$ and $D$ be an open set such that $\ol D\subset\Om$. We also assume that $D$ satisfies 
\begin{equation}\label{assump:dist}
    \dist(D,\pa \Omega) \geq \dfrac{1}{M}, 
\end{equation}
where 
$M$ is a positive constant that for simplicity will be taken to be greater than $1$. 
Let us put $w = u - v$, where $u, v$ are the solutions of \eqref{eq:2phase problem} and \eqref{serrin1}, respectively. The function $w$ satisfies the following boundary value problem:
\begin{equation}\label{eq:diff}
\begin{cases}
-\Delta w = \dv \left( (\si_c - 1)\chi_D \nabla u \right) \, \mbox{ in } \, \Om,\\
w=0 \, \mbox{ on } \, \pa \Om. 
\end{cases}
\end{equation}
We consider a perturbation argument by using Green's function. 
Let $G(x,y)$ be the Green's function of the Dirichlet boundary value problem for the Laplace operator in $\Omega$. By \cite[pp.17--19]{GT}, the Green's function $G$ is represented by 
\begin{equation*}
    G(x,y) = \Gamma(x-y) - h(x,y), 
\end{equation*}
where $\Gamma$, defined for $x \in \RR^{N}\setminus\{0\}$, is the fundamental solution of Laplace's equation:
\begin{equation}\label{fundamental sol}
\Gamma(x) = 
\begin{cases}
     -\dfrac{1}{2\pi} \log|x| \quad (N = 2),\\
     \dfrac{1}{N(N-2)\om_N}\dfrac{1}{|x|^{N-2}}  \quad (N \geq 3), 
    \end{cases}
\end{equation}
(here $\om_N$ denotes the volume of the unit ball in $\rn$) and for $y \in \Om$, $h(\cdot, y)$ is the solution to the following Dirichlet boundary value problem:
\begin{equation}\label{eq:h}
    \begin{cases}
    -\Delta_x h(x,y) = 0 \quad x \in \Om,\\
    h(x,y)= \Gamma(x-y) \quad x \in \pa \Om. 
    \end{cases}
\end{equation}
The following gradient estimate for Green's function $G$ will be useful in the proof of Theorem \ref{thm II}. 
\begin{lemma}\label{esti for green}
Let $U:=\setbld{x\in\ol\Om}{\dist(x,D)>\frac{1}{2M}}$. Then, there exists a positive constant $C^*$ depending on $N$, $|\Om|$ and the $C^{1,\al}$ modulus of $\pa\Om$ such that 
\begin{equation*}
\sup_{(x,y)\in U\times D} |\gr_x \gr_y G(x,y)|\le C^* M^{N+1}.    
\end{equation*}
\end{lemma}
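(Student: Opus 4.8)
The plan is to prove the gradient estimate by recalling the splitting $G(x,y) = \Gamma(x-y) - h(x,y)$ and bounding the two mixed second derivatives $\na_x \na_y \Gamma(x-y)$ and $\na_x \na_y h(x,y)$ separately, uniformly over $(x,y) \in U \times D$. The key geometric observation is that for $x \in U$ and $y \in D$ we have $|x-y| \geq \dist(x,D) > \frac{1}{2M}$, so the singularity of $\Gamma$ is never approached. First I would handle the fundamental solution: since $\na_x \na_y \Gamma(x-y)$ is a homogeneous kernel of order $-N$ (each derivative lowers the homogeneity by one, starting from $|x|^{2-N}$), we get the pointwise bound $|\na_x \na_y \Gamma(x-y)| \le C(N)\,|x-y|^{-N} \le C(N) (2M)^N$, which already accounts for the factor $M^{N}$ and gives room inside the claimed $M^{N+1}$.

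The harder term is $\na_x \na_y h(x,y)$, and this is where I expect the main obstacle to lie. The idea is to exploit that $h(\cdot,y)$ is harmonic in $\Om$ and that, for $y \in D$, the boundary data $\Gamma(\cdot - y)$ is smooth and bounded near $\pa \Om$ (again because $\dist(D,\pa\Om) \ge 1/M$ keeps the singularity away from the boundary). The plan is: (i) for fixed $y \in D$, estimate the $C^{1,\al}(\pa\Om)$ norm of the boundary data $x \mapsto \Gamma(x-y)$ and its $y$-derivative $x \mapsto \na_y \Gamma(x-y)$ in terms of $N$, $M$, and $|\Om|$; (ii) apply a global Schauder estimate (as in \cite[Theorem 8.33]{GT}, already used repeatedly in this paper) to control $\na_x h(\cdot,y)$ on $\ol\Om$ via this boundary data; and (iii) interchange or chain the $x$- and $y$-differentiation, noting that $\na_y h(x,y)$ is itself harmonic in $x$ with boundary data $\na_y \Gamma(x-y)$, so the same Schauder bound applies to $\na_x \na_y h$. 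The degree of homogeneity bookkeeping here is what produces the extra power of $M$: each $y$-derivative of $\Gamma(x-y)$ on $\pa\Om$ costs a factor controlled by $\dist(D,\pa\Om)^{-1} \le M$, so that the $C^{1,\al}$ norm of the boundary data for $\na_x\na_y h$ scales like $M^{N}$ times a further factor of $M$ from the additional $y$-differentiation, yielding $M^{N+1}$.

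The main obstacle will be tracking the exact $M$-dependence of the Schauder constant and verifying that it does not itself blow up with $M$: the Schauder constant depends only on $N$, $\al$, and the $C^{1,\al}$ modulus of $\pa\Om$ (which is fixed and does not involve $M$), so the entire $M$-dependence must come from the $C^{1,\al}(\pa\Om)$ norm of the boundary data $\Gamma(\cdot-y)$ and $\na_y\Gamma(\cdot-y)$. I would therefore carefully compute that, for $x \in \pa\Om$ and $y \in D$, every relevant derivative of $\Gamma(x-y)$ (up to the Hölder seminorm of order $\al$ in $x$, and after one $y$-differentiation) is bounded by a constant times $|x-y|^{-(N-1)}$ or $|x-y|^{-N}$, hence by a power of $M$, using $|x-y| \ge \dist(D,\pa\Om) \ge 1/M$. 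Combining the homogeneity count for $\Gamma$ with this Schauder bound for $h$ and taking the supremum over $U \times D$ then yields the estimate $\sup_{(x,y)\in U\times D}|\na_x\na_y G(x,y)| \le C^* M^{N+1}$ with $C^*$ depending only on $N$, $|\Om|$, and the $C^{1,\al}$ modulus of $\pa\Om$, as claimed.
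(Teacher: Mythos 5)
Your proposal is correct and follows essentially the same route as the paper: split $G=\Gamma-h$, bound $\na_x\na_y\Gamma$ pointwise via homogeneity and the lower bound $|x-y|>\tfrac{1}{2M}$, and control $\na_x\na_y h$ by noting that $\pa_{y_j}h(\cdot,y)$ is harmonic with boundary data $\pa_{y_j}\Gamma(\cdot-y)$ and applying the Schauder and $L^\infty$ estimates, with the power $M^{N+1}$ coming from the $C^{1,\al}$ (in the paper, crudely the $C^3$) norm of the boundary data. The only cosmetic difference is that the paper organizes the derivative counting through a single estimate $|D^\be\Gamma(x-y)|\le C(N)M^{N-2+|\be|}$, but the bookkeeping is the same.
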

\begin{proof}
Fix $(x,y)\in U\times D$ and let $\be$ be a multi-index with $|\be|\ge1$.
From the definition of the fundamental solution \eqref{fundamental sol}, by direct calculation we obtain the estimate
\begin{equation}\label{esti:Gamma}
    |D^\be \Gamma(x-y)| \leq \dfrac{C(N)}{|x-y|^{N-2+|\be|}}\le C(N)M^{N-2+|\be|}, 
\end{equation}
where $C(N)>0$ is a constant depending only on $N$. 

In what follows, let us show the gradient estimate for $h$. First, notice that the function $\pa_{y_j}h(\cdot, y)$ is harmonic  on $\Om$ and verifies 
\begin{equation*}
\pa_{y_j} h(z,y) = \pa_{y_j} \Ga(z-y) \quad \text{for }z\in\pa\Om.     
\end{equation*}
Now, by the classical Schauder estimates for the Dirichlet problem near the boundary \cite[Theorem 8.33]{GT} and the $L^\infty$ estimate \cite[Theorem 8.16]{GT}, we can estimate $|\pa_{x_i}\pa_{y_j}h(x,y)|$ to get
\begin{equation*}
|\pa_{x_i}\pa_{y_j}h(x,y)|\le 
\norm{\pa_{y_j}h(\cdot, y)}_{C^{1,\al}(\ol\Om)}\le
C^* \norm{\pa_{y_j} \Ga(\cdot - y)}_{C^{1,\al}(\pa\Om)},    
\end{equation*}
where the constant $C^*>0$ only depends on 
$|\Om|$, $N$ and the $C^{1,\al}$ modulus of $\pa\Om$.
Now, up to redefining $C^*$, one can estimate the right-hand side in the above as follows
\begin{equation*}
C^* \norm{\pa_{y_j} \Ga(\cdot - y)}_{C^{1,\al}(\pa\Om)}\le C^* \norm{\Ga(\cdot-\cdot)}_{C^3(\pa\Om\times \ol D)} \le C^* M^{N+1},    
\end{equation*}
where we made use of \eqref{esti:Gamma} in the last inequality.
\end{proof}

\begin{proof}[Proof of Theorem \ref{thm II}]
Let us consider the Dirichlet boundary value problem \eqref{eq:diff}. For any $x\in U\cap \Om$, Green's representation formula gives us 
\begin{align*}
    w(x) &= \int_{\Omega} G(x,y) \mathrm{div}_{y} \left( (\si_c - 1)\chi_D \nabla u(y) \right) \, dy \\
    &= - (\sigma_c - 1) \int_D \nabla_y G(x,y) \cdot \nabla_y u(y) \, dy. 
\end{align*}
Then, we have 
\begin{equation*}
    \pa_{x_i} w(x) = - (\sigma_c - 1) \int_D \pa_{x_i} \nabla_y G(x,y) \cdot \nabla_y u(y) \, dy. 
\end{equation*}
Now, by Lemma \ref{esti for green} and the Cauchy--Schwarz inequality, we may write (as usual, up to redefining $C^*$) 
\begin{align}\label{esti0}
    |\nabla_x w| &\leq \sqrt{N} |\sigma_c - 1| \int_D |\nabla_x \nabla_y G(x,y)| |\nabla_y u(y)| \, dy \notag \\
    &\leq |\sigma_c - 1| C^* M^{N+1} |D|^{1/2}\left( \int_\Omega |\nabla_y u|^{2} \, dy \right)^{1/2}.
\end{align}
Consider the weak form of \eqref{eq:2phase problem}. For any $\varphi \in H^1_0(\Omega)$, 
\begin{equation*}
    \int_{\Omega} \sigma \nabla_y u \cdot \nabla_y \varphi \, dy = \int_{\Omega} \varphi \, dy. 
\end{equation*}
Taking $\varphi = u$, then 
\begin{equation}\label{esti1}
\min\{\sigma_c,1\} \int_{\Omega} |\nabla_y u|^2 \, dy \leq \int_{\Omega} \sigma |\nabla_y u|^2 \, dy = \int_{\Omega} u \, dy \leq |\Omega|^{1/2} \norm{u}_{L^2(\Omega)}.    
\end{equation}
Let now $\la_1(\Om)$ denote the first eigenvalue of the Laplace operator with Dirichlet zero boundary condition, that is 
\begin{equation*}
    \la_1(\Om)=\inf_{f\in H^1_0(\Om),\; f\not\equiv 0} \frac{\norm{\gr f}_{L^2(\Om)}}{\norm{f}_{L^2(\Om)}}.
\end{equation*}
Since $u \in H^1_0(\Omega)$ and $u\not\equiv0$, we have $\lambda_1(\Omega) \leq \norm{\nabla_y u}^2_{L^2(\Omega)} \mathlarger{\mathlarger{\mathlarger{\mathlarger{/}}}} \norm{u}^2_{L^2(\Omega)}$. Thus, we obtain 
\begin{equation}\label{esti2}
    \norm{u}_{L^2(\Omega)} \leq \dfrac{\norm{\nabla_y u}_{L^2(\Omega)}}{\lambda^{1/2}_1(\Omega)}. 
\end{equation}
Combining \eqref{esti1} with \eqref{esti2}, 
\begin{equation}\label{esti3}
    \norm{\nabla_y u}_{L^2(\Omega)} \leq \dfrac{|\Omega|^{1/2}}{\lambda^{1/2}_1(\Omega)\min\{\sigma_c,1\}}. 
\end{equation}
By \eqref{esti0} and \eqref{esti3}, we have 
\begin{equation*}
    |\nabla_x w| \leq |\sigma_c - 1| C^* M^{N+1} |D|^{1/2} \dfrac{ |\Omega|^{1/2}}{\lambda^{1/2}_1(\Omega)\min\{\sigma_c,1\}}.
\end{equation*}
By the Faber--Krahn inequality, there exists a ball $B^\star$ such that 
\begin{equation*}
    \lambda_1(B^\star) \leq \lambda_1(\Omega), \quad |B^\star| = |\Omega|. 
\end{equation*}
Therefore, we obtain 
\begin{equation}\label{esti:grad w}
    |\nabla_x w| \leq |\sigma_c - 1| C^* M^{N+1} |D|^{1/2} \dfrac{ |\Omega|^{1/2}}{\lambda^{1/2}_1(B^\star)\min\{\sigma_c,1\}}.
\end{equation}
By the classical Schauder estimates for the Dirichlet problem near the boundary (see, for instance \cite[Theorem 8.33]{GT} and the subsequent remarks), $\nabla_x w$ is continuous up to the boundary $\pa \Omega$. If we let $x$ tend to $\pa \Omega$, then we realize that \eqref{esti:grad w} also holds true for $x\in\pa\Om$. 
Let us recall that the solution $u$ of \eqref{eq:2phase problem} satisfies the overdetermined condition $\pa_n u = c$ on $\pa \Omega$. Therefore, for any $x\in\pa\Om$, we obtain 
\begin{equation*}
    |\pa_n v - c| \leq |\nabla_x w| \leq |\sigma_c - 1| |D|^{1/2}\dfrac{C^* M^{N+1}|\Omega|^{1/2}}{\lambda^{1/2}_1(B^\star)\min\{\sigma_c,1\}}.
\end{equation*}
By applying Corollary \ref{cor:Improved-Serrin-stability-deviation in uniformnorm}, we get the following estimate: 
\begin{equation*}
\rho_e-\rho_i\le C_3 |D|^{\frac{\tau_N}{2}},   
\end{equation*}
where $\tau_N$ is defined as in Theorem \ref{thm:Improved-Serrin-stability}.
The constant $C_3$ depends on $N$, $d_\Om$, $\sg_c$, $M$, the $C^{1,\al}$ modulus of $\pa\Om$, and $\theta$ (only in the case $N=3$). 
\end{proof}

\begin{remark}
It is clear that the proof of Theorem \ref{thm II} can also be used to obtain a stability estimate in the spirit of Theorem \ref{thm I}. However, notice that such a proof would lead to a (weaker) version of Theorem \ref{thm I} in which the constant $C_2$ also depends on the distance between $D$ and $\pa \Om$, and $|D|$. 
\end{remark}

\begin{remark}
Whenever an apriori bound for $\nr \na u \nr_{L^{\infty}(D)}$ is available, the stability exponent of Theorem \ref{thm II} can be improved (that is $\tau_N /2$ can be replaced by $\tau_N$), at the cost of allowing the constant $C_3$ to depend also on the above mentioned bound for $\nr \na u \nr_{L^{\infty}(D)}$. This can be obtained by replacing in the proof of Theorem \ref{thm II}, \eqref{esti0} with
$$
| \na_x w| \le | \sigma_c -1| C^* M^{N+1} |D| \, \nr \na_y u \nr_{L^\infty (D)} .
$$

\end{remark}

\section{Non-existence for the inner problem when $\si_c\simeq 1$ or $|D|$ is small }\label{sec:nonexistence}
In this section we show how one can employ the results of Theorems \ref{thm I} and \ref{thm II} to prove non-existence for the inner problem corresponding to \eqref{odp} when $\si_c\simeq 1$ or $|D|$ is small.  

\begin{proof}[Proof of Corollary \ref{thm III}]
Let $\Om$ be a bounded domain of $\rn$ different from a ball and set $c:=-|\Om|/|\pa\Om|$. 
Since, by hypothesis, $\Om$ is not a ball, we have
\begin{equation*}
\rho_e-\rho_i>0.
\end{equation*}
Now, let $C_4$ and $\tau_N$ be the same constants as in the statement of Corollary \ref{thm III} 
 and suppose by contradiction that there is an open set $D\subset\ol D\subset\Om$ such that the overdetermined problem \eqref{odp} admits a solution $u$ for some $\sg_c$ satisfying 
\begin{equation}\label{by contradiction}
|\sg_c-1| < 
    C_4\left( {\rho_e-\rho_i} \right)^{\frac{1}{\tau_N}},    
\end{equation} 
(notice that there exist infinitely many such values of $\sg_c$ because $\rho_e-\rho_i>0$ by construction).
Finally, Theorem \ref{thm I} yields 
\begin{equation*}
\rho_e-\rho_i\le C_2|\sg_c-1|^{\tau_N}<C_2\  C_4^{\tau_N} (\rho_e-\rho_i)=\rho_e-\rho_i,    
\end{equation*}
which is a contradiction.
\end{proof}

\begin{remark}
By applying the result of Remark \ref{extension of thm I}, we can extend Corollary \ref{thm III} to the case where the overdetermined condition in \eqref{odp} is replaced by \eqref{extended odc} for some $\eta\in L^\infty(\pa\Om)$ with vanishing mean over $\pa\Om$. In this case, given a bounded domain $\Om$ of class $C^{1,\al}$ that is not a ball, the overdetermined problem given by \eqref{eq:2phase problem} and \eqref{extended odc} does not admit a solution of the form $(D,\Om)$ if \begin{equation*}
    |\sg_c-1| < 
    \frac{1}{C_7}\left\{\left( \frac{\rho_e-\rho_i}{C_6} \right)^{\frac{1}{\tau_N}} -\norm{\eta}_{L^\infty(\pa\Om)}  \right\}, 
\end{equation*}
where $C_6$ and $\tau_N$ are as in Corollary \ref{cor:Improved-Serrin-stability-deviation in uniformnorm}, while $C_7$ is the constant in \eqref{v close to neumann}.
Notice that the set of values $\sg_c$ satisfying the inequality above is not empty if the norm $\norm{\eta}_{L^\infty(\pa\Om)}$ is small enough. 
\end{remark}

\begin{proof}[Proof of Corollary \ref{thm IV}]
It follows from  Theorem \ref{thm II} by arguing by contradiction. The proof will be omitted because it is completely analogous to that of Corollary \ref{thm III}. 
\end{proof}

\begin{figure}[h]
\centering
\includegraphics[width=0.9\linewidth]{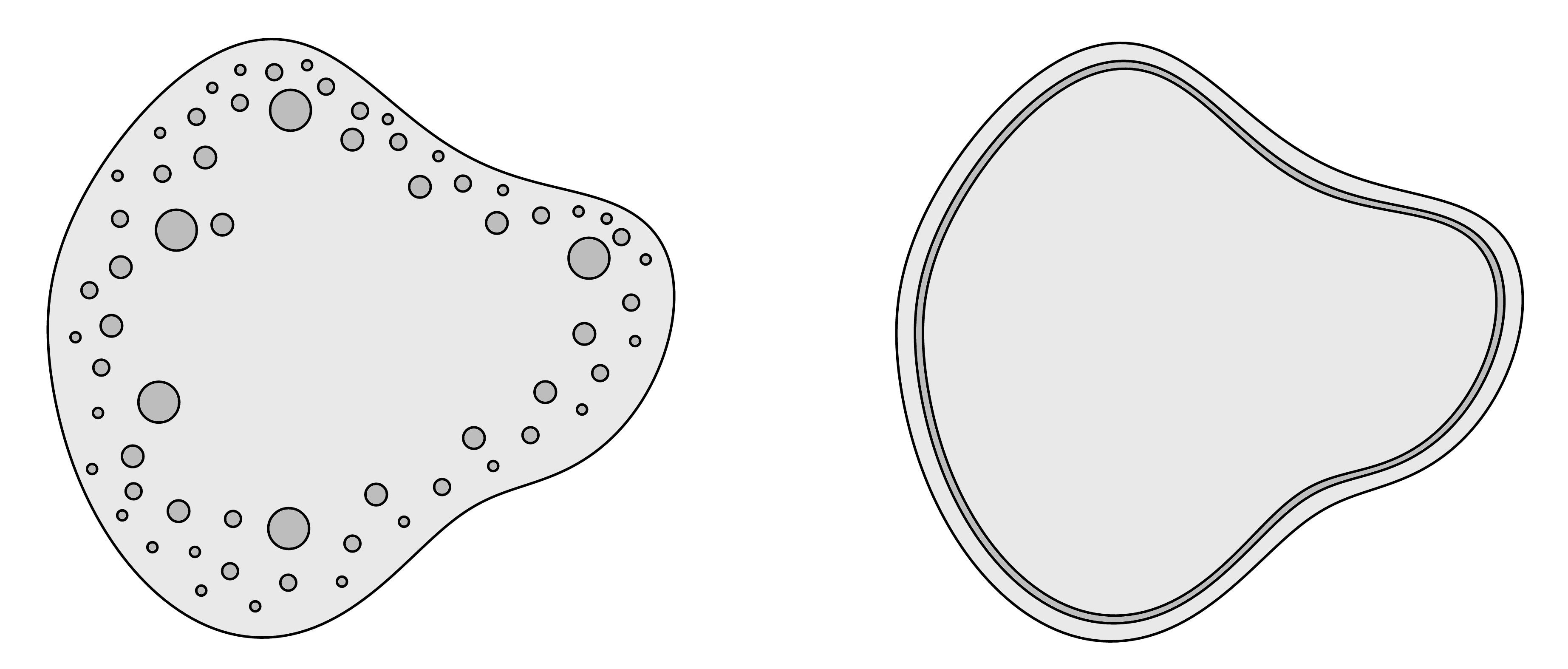}
\caption{Two examples of wild solutions. Left: due to the formation of microstructures. Right: due to boundary layer effect.} 
\label{wildsolutions}
\end{figure}

\begin{remark}
We remark that the constant $C_5$ of Corollary \ref{thm IV} depends on the distance between $D$ and $\pa\Om$. Indeed, given $\Om$, $\sg_c>0$ and $M>1$, Corollary \ref{thm IV} tells us that there does not exist a solution of \eqref{odp} of the form $(D,\Om)$, where $D$ is an open set belonging to the class
\begin{equation*}
\cD_M:=\left\{ D\subset
\Om\;:\; \dist(D,\pa\Om)\ge \frac 1 M
\right\}   
\end{equation*}
and the volume $|D|$ is small enough (namely, smaller than $C_5\left( {\rho_e-\rho_i} \right)^{\frac{2}{\tau_N}}$).

Indeed, Corollary \ref{thm IV} does not preclude the existence of a family of ``wild solutions" $\left\{(D_k,\Om)\right\}_{k\ge 1}$ of \eqref{odp} with $D_k\in\cD_{M_k}$ such that 
\begin{equation*}
    \lim_{k\to \infty}M_k=\infty.
\end{equation*}
Geometrically speaking, this suggests the possibility of ``wild solutions" $(D_k,\Om)$ where the inclusion $D_k$ becomes closer and closer to the boundary $\pa\Om$ as $k\to \infty$.
We conjecture that this could happen in many ways. For example, when $D$ takes the form of a thin layer increasingly close to $\pa\Om$ or when one allows the formation of increasingly many connected components that give rise to a microstructure. Indeed, both such configurations seem likely to affect the global behavior of the solution of \eqref{eq:2phase problem} near the boundary (see Figure \ref{wildsolutions}). Such behaviors are linked to the so-called homogenization phenomena (see \cite{BCF1980, Fr1980, MT1997A, MT1997B, Y2-2019, All2019} and the references therein). 
\end{remark}

\section*{Acknowledgements}
The first author is partially supported by JSPS Grant-in-Aid for Research Activity Start-up Grant Number JP20K22298.
The second author is supported by the Australian Laureate Fellowship FL190100081 ``Minimal surfaces, free boundaries and partial differential equations'' and is member of AustMS and INdAM/GNAMPA.
The third author is partially supported by JSPS Grant-in-Aid for Early-Career Scientists Grant Number JP21K13822. 
%
%

\begin{small}

\end{small}

\bigskip

\noindent
\textsc{
Mathematical Institute, Tohoku University, Aoba-ku, 
Sendai 980-8578, Japan}\\
\noindent
{\em Electronic mail address:}
cavallina.lorenzo.e6@tohoku.ac.jp

\bigskip

\noindent
\textsc{
Department of Mathematics and Statistics, The University of Western Australia, 35 Stirling Highway, Crawley, Perth, WA 6009, Australia} \\
\noindent
{\em Electronic mail address:}
giorgio.poggesi@uwa.edu.au

\bigskip

\noindent
\textsc{ 
Kyoto University Institute for Advanced Study, Sakyo-ku, 
Kyoto 606-8501, Japan } \\
\noindent
{\em Electronic mail address:}
yachimura.toshiaki.8n@kyoto-u.ac.jp

\end{document}